\let\oldbibliography\thebibliography
\renewcommand{\thebibliography}[1]{%
\oldbibliography{#1}%
\setlength{\itemsep}{0pt}%
}
\newtheorem{definition}{Definition}[section]
\newtheorem{theorem}{Theorem}[section]
\newtheorem{lemma}{Lemma}[section]
\newtheorem{corollary}{Corollary}[section]
\newtheorem{proposition}{Proposition}[section]
\newtheorem{remark}{Remark}[section]
\newcommand{\bt}{\begin{theorem}}
\newcommand{\et}{\end{theorem}}
\newcommand{\bl}{\begin{lemma}}
\newcommand{\el}{\end{lemma}}
\newcommand{\bd}{\begin{definition}}
\newcommand{\ed}{\end{definition}}
\newcommand{\bc}{\begin{corollary}}
\newcommand{\ec}{\end{corollary}}
\newcommand{\bp}{\begin{proof}}
\newcommand{\ep}{\end{proof}}
\newcommand{\bx}{\begin{example}}
\newcommand{\ex}{\end{example}}
\newcommand{\bi}{\begin{exercise}}
\newcommand{\ei}{\end{exercise}}
\newcommand{\bo}{\begin{prop}}
\newcommand{\eo}{\end{prop}}
\newcommand{\br}{\begin{remark}}
\newcommand{\er}{\end{remark}}
\newcommand{\be}{\begin{equation}}
\newcommand{\ee}{\end{equation}}
\newcommand{\ba}{\begin{align}}
\newcommand{\ea}{\end{align}}
\newcommand{\bn}{\begin{enumerate}}
\newcommand{\en}{\end{enumerate}}
\newcommand{\bg}{\begin{align*}}
\newcommand{\bcs}{\begin{cases}}
\newcommand{\ecs}{\end{cases}}
\newcommand{\bean}{\begin{eqnarray*}}
\newcommand{\eean}{\end{eqnarray*}}
\numberwithin{equation}{section}
\begin{document}
\title{{\bf Qualitative analysis for an elliptic system in the punctured space}\thanks {Supported by  NSFC.   \qquad E-mail addresses:  hui-yang15@mails.tsinghua.edu.cn (H.  Yang),   \qquad \qquad \qquad  wzou@math.tsinghua.edu.cn (W.  Zou)}
}

\date{}
\author{\\{\bf Hui  Yang$^{1}$,\;\;  Wenming Zou$^{2}$}\\
\footnotesize {\it  $^{1}$Yau Mathematical Sciences Center, Tsinghua University, Beijing 100084, China}\\
\footnotesize {\it  $^{2}$Department of Mathematical Sciences, Tsinghua University, Beijing 100084, China}
}

\maketitle
\begin{center}
\begin{minipage}{110mm}
\begin{center}{\bf Abstract}\end{center}
In this paper, we investigate  the qualitative properties of  positive solutions for the following  two-coupled elliptic system in the punctured space:
\begin{equation*}
\begin{cases}
-\Delta u =\mu_1 u^{2q+1} + \beta u^q v^{q+1} \\
-\Delta v =\mu_2 v^{2q+1} + \beta v^q u^{q+1}
\end{cases} \textmd{in} ~\mathbb{R}^n \backslash \{0\},
\end{equation*}
where $\mu_1, \mu_2$ and $\beta$ are all positive constants, $n\geq 3$.  We establish a monotonicity formula that completely characterizes  the singularity of positive solutions.  We prove a sharp global estimate for both  components of positive solutions.  We also prove the nonexistence of positive semi-singular  solutions, which means  that one component is  bounded near the singularity and the other component is  unbounded near the singularity.

\vskip0.10in

\noindent {\it Mathematics Subject Classification (2010): 35J47;  35B09; 35B40 }

\end{minipage}

\end{center}

\vskip0.390in

\section{Introduction and Main results}
In this paper, we investigate the  qualitative properties of  positive solutions for the following two-coupled elliptic system
\begin{equation}\label{SPS}
\begin{cases}
-\Delta u =\mu_1 u^{2q+1} + \beta u^q v^{q+1} \\
-\Delta v =\mu_2 v^{2q+1} + \beta v^q u^{q+1}
\end{cases} \textmd{in} ~\mathbb{R}^n \backslash \{0\},
\end{equation}
where $\mu_1, \mu_2$ and $\beta$ are all positive constants, $n\geq 3$ and $p:=2q+1>1$.  We say that $(u, v)$ is a positive solution of \eqref{SPS} if $u, v >0$ in $\mathbb{R}^n \backslash \{0\}$,  and that $(u, v)$ is a nonnegative solution of \eqref{SPS} if $u, v \geq 0$ in $\mathbb{R}^n \backslash \{0\}$.

\vskip0.1in

System \eqref{SPS} is related to the following nonlinear Schr\"{o}dinger system
\begin{equation}\label{Sch}
\begin{cases}
-\Delta u + \lambda_1 u =\mu_1 u^{2q+1} + \beta u^q v^{q+1} ~~~ \textmd{in} ~\Omega,\\
-\Delta v + \lambda_2 v =\mu_2 v^{2q+1} + \beta v^q u^{q+1}  ~~~ \textmd{in} ~\Omega,\\
u, v >0  ~~~\textmd{in} ~\Omega,
\end{cases}
\end{equation}
where $\Omega \subset \mathbb{R}^n$ is a domain. In the case $p=3$ $(q=1)$, the cubic system \eqref{Sch} arises in mathematical models from various physical phenomena, such as nonlinear optics and Bose-Einstein condensation. We refer the reader for these to the survey articles \cite{P-1,P-2}, which also contain information about the physical relevance of non-cubic nonlinearities. In the subcritical case $p<\frac{n+2}{n-2}$, the system \eqref{Sch} on a bounded smooth domain or on  $\mathbb{R}^N$ has   been widely investigated by variational methods and topological  methods in the last decades, see \cite{AC,Dancer10,Lin-Wei1,Maia,Quittner,Sirakov,Tavares} and references therein. In the critical case $p=\frac{n+2}{n-2}$,  the system \eqref{Sch} on a bounded  domain  has  been investigated in \cite{CL15,Chen12,Chen15}.

\vskip0.1in

In a recent paper \cite{CL}, Chen and Lin studied the  positive singular solutions of system \eqref{SPS} with the   critical  case $p=\frac{n+2}{n-2}$. They proved a sharp result about the  removable singularity and the nonexistence of positive  semi-singular solutions.   In this paper, we are concerned with these problems in the subcritical case $p < \frac{n+2}{n-2}$.

\vskip0.1in

When $1 < p \leq \frac{n}{n-2}$, by a Liouville type theorem in \cite{B-P} (also see \cite{S-Z}), we know that the system \eqref{SPS} has only the trivial nonnegative solution $u=v\equiv0$ in $\mathbb{R}^n \backslash \{0\}$.
Hence, we just need to consider the case $\frac{n}{n-2} < p < \frac{n+2}{n-2}$.

\vskip0.11in

In order to motivate our results on \eqref{SPS}, we first recall the classical results for the single elliptic equation
\begin{equation}\label{SE}
-\Delta u =u^p, ~~~~~~~  u>0 ~~~\textmd{in} ~\mathbb{R}^n \backslash \{0\}.
\end{equation}
The asymptotic behaviors  of solutions of equation \eqref{SE} near  0 and near $\infty$ were  studied in  \cite{CGS,Gidas2} for $\frac{n}{n-2} < p < \frac{n+2}{n-2}$ and in \cite{CGS,Fowler,K-M-P-S}  for $p=\frac{n+2}{n-2}$.  In the other case of $p$,  see  Lions \cite{L} for $1 < p < \frac{n}{n-2}$, Aviles \cite{A} for $p=\frac{n}{n-2}$ and  Bidaut-V\'eron and V\'eron \cite{B-V} for $p > \frac{n+2}{n-2}$, where the local  behavior of solutions of equation \eqref{SE} in a punctured ball was studied.  
When $\frac{n}{n-2} < p < \frac{n+2}{n-2}$, it is well know that the function
\begin{equation}\label{SE-01}
u(x)=C_0 |x|^{-\frac{2}{p-1}}
\end{equation}
with
\begin{equation}\label{SE-02}
C_0=\left[ \frac{2(n-2)}{(p-1)^2} \left( p-\frac{n}{n-2} \right) \right]^{1/(p-1)}
\end{equation}
is an explicit singular solution for \eqref{SE}.  Besides \eqref{SE-01}, there exists another (one-parameter family) solution of \eqref{SE} which satisfies
\begin{equation}\label{SE-03}
u(x) \sim
\begin{cases}
C_0 |x|^{-\frac{2}{p-1}}~~~~ & \textmd{near}~ x=0, \\
\lambda |x|^{-(n-2)}~~~~ & \textmd{near}~ x=+\infty,
\end{cases}
\end{equation}
where $C_0$ is given by \eqref{SE-02} and $\lambda >0$. See Appendix A in \cite{Gidas2}.  On the other hand, we see easily that both   components of the positive solution $(u, v)$ of \eqref{SPS} satisfy
\begin{equation}\label{SEI}
-\Delta w \geq w^p, ~~~~~~~  w>0 ~~~\textmd{in} ~\mathbb{R}^n \backslash \{0\}
\end{equation}
up to a multiplication. It follows from Corollary II in \cite{S-Z} that the inequality \eqref{SEI} has a positive solution $w_0\in C^2(\mathbb{R}^n)$ if $p>\frac{n}{n-2}$.  Furthermore, $w_0$ satisfies $-\Delta w_0 \geq w_0^p$ in entire space  $\mathbb{R}^n$.

\vskip0.1in

The purpose of the current paper is to study the positive solutions of system \eqref{SPS} with $\displaystyle \frac{n}{n-2} < p <\frac{n+2}{n-2}$. For any nonnegative  solution $(u, v)$ of \eqref{SPS}, we can prove that $\displaystyle \lim_{|x|\rightarrow 0} u(x)$ and $\displaystyle \lim_{|x|\rightarrow 0} v(x)$ make sense (the limit may be $+\infty$), see Theorem 2.1 in Sect. 2. Hence, there are three possibilities in general:

\begin{itemize}

\item [(1)] both $\displaystyle \lim_{|x|\rightarrow 0} u(x) < +\infty$ and $\displaystyle \lim_{|x|\rightarrow 0} v(x) < +\infty$, i.e., $(u, v)$ is an entire solution of \eqref{SPS} in $\mathbb{R}^n$.
By a Liouville type theorem, cf. Theorem 2  in \cite{RZ},  $u=v \equiv 0$ in this case.

\item [(2)] $\displaystyle \lim_{|x|\rightarrow 0} u(x) = \lim_{|x|\rightarrow 0} v(x)= +\infty$, i.e., the origin is a non-removable singularity of both $u$ and $v$, and we call solutions of this type {\it positive both-singular solutions at 0}. 

\item [(3)] either $$\lim_{|x|\rightarrow 0} u(x) < \lim_{|x|\rightarrow 0} v(x) =+\infty$$ or $$\lim_{|x|\rightarrow 0} v(x) < \lim_{|x|\rightarrow 0} u(x) =+\infty,$$ and we call solutions of this type {\it positive semi-singular solutions at 0}.

\end{itemize}

\vskip0.1in

The paper  \cite{CL} only studied the critical problem, where the Pohozaev identity  plays a very important role in characterizing the  positive singular solutions. More precisely, let $(u, v)$ be a positive solution of \eqref{SPS} with $p=\frac{n-2}{n+2}$. Denote $B_r:=\{ x\in \mathbb{R}^n : |x| < r\}$. By the Pohozaev identity, one has   $K(r; u, v)=K(s; u, v)$ for $0 < s < r$, where
$$
\aligned
K(r; u, v):= &   \int_{\partial B_r} \bigg[ \frac{n-2}{2}\left( u\frac{\partial u}{\partial \nu} + v\frac{\partial v}{\partial \nu} \right) -\frac{r}{2} (|\nabla u|^2 + |\nabla v|^2)  \\
&~~~   + r \left| \frac{\partial u}{\partial \nu} \right|^2  + r \left| \frac{\partial v}{\partial \nu} \right|^2 + \frac{r}{2^*} \left(\mu_1 u^{2^*} + \mu_2 v^{2^*} + 2\beta u^{\frac{2^*}{2}}  v^{\frac{2^*}{2}}\right) \bigg] d\sigma,
\endaligned
$$
and $\nu$ is the unit outer normal of $\partial B_r$. Hence,  $K(r; u,v)$ is a constant independent of $r$, and  we denote this constant by $K(u, v)$. Then the result of removable singularity in \cite{CL} is as follows.

\vskip0.1in

\noindent {\bf Theorem A}   \cite{CL}  {\it Let $\mu_1, \mu_2, \beta >0$ and $(u, v)$ be a nonnegative solution of \eqref{SPS} with $p=\frac{n-2}{n+2}$.  Then $K(u,v) \leq 0$. Furthermore, $K(u, v)=0$ if and only if $u, v \in C^2(\mathbb{R}^n)$, namely both $u$ and $v$ are smooth at 0. }

\vskip0.1in

\noindent However, there are some differences between the  subcritical case and the  critical case.  The classical Pohozaev identity seems to be unavailable  to characterize the  positive singular solutions in subcritical case.

\vskip0.1in

Here we will establish a monotonicity formula to character  the  positive singular solutions of \eqref{SPS} in the case $\displaystyle \frac{n}{n-2} < p < \frac{n+2}{n-2}$.  For $r>0$, define
\begin{equation}\label{MF}
\aligned
E(r; u, v):= &   r^{\tau}\int_{\partial B_r} \bigg[\frac{2}{p-1}\left( u\frac{\partial u}{\partial \nu} + v\frac{\partial v}{\partial \nu} \right) -\frac{r}{2} (|\nabla u|^2 + |\nabla v|^2) \\
&~~~ + r \left| \frac{\partial u}{\partial \nu} \right|^2  + r \left| \frac{\partial v}{\partial \nu} \right|^2  + \frac{\tau}{r(p-1)} (u^2 + v^2)  \bigg] d\sigma  \\
&~~~  + r^{\tau}\int_{\partial B_r}  \frac{r}{p+1} \left(\mu_1 u^{p+1} + \mu_2 v^{p+1} + 2\beta u^{q+1}  v^{q+1}\right) d\sigma,
\endaligned
\end{equation}
where
$
\tau=\frac{4}{p-1} - n +2.
$
Let $(u, v)$ be a nonnegative solution of \eqref{SPS} with $\displaystyle \frac{n}{n-2} < p < \frac{n+2}{n-2}$. Then we will prove that  $E(r; u, v)$ is nondecreasing and bounded for  $r\in (0, +\infty)$ (see Proposition \ref{P-01} and Lemma \ref{L-02} in Sect. 3).  Hence, we may define  the limits
$$
E(0;u,v):=\lim_{r \rightarrow 0^+} E(r; u, v)~~~~ \textmd{and}~~~~E(\infty;u,v):=\lim_{r \rightarrow +\infty} E(r; u, v).
$$
Our first result is about the singularity of positive solutions.  We partly classify the nonnegative solutions of \eqref{SPS} by  $E(r; u, v)$.
\begin{theorem}\label{T01}
Let $\mu_1, \mu_2, \beta >0$ and $(u, v)$ be a nonnegative solution of \eqref{SPS} with $\frac{n}{n-2} < p <\frac{n+2}{n-2}$.  Then
$$\Big\{ E(0;u,v), E(\infty;u,v) \Big\}  \subset \left\{ 0, -\frac{p-1}{2(p+1)} (k^2 + l^2) C_0^{p+1} \right\},$$
where $C_0$ is given by \eqref{SE-02} and $k, l \geq 0 $ (not all 0)  satisfy
\begin{equation}\label{T011}
\mu_1 k^{2q} + \beta k^{q-1} l^{q+1} =1,~~~~\mu_2 l^{2q} + \beta l^{q-1} k^{q+1} =1.
\end{equation}
Furthermore, we get

\begin{itemize}

\item [(1)] $E(0;u,v)=0$ if and only if $(u, v)$ is trivial, i.e., $u=v\equiv0$.

\item [(2)] $E(r; u, v)\equiv constant\not=0 $ if and only if  $(u, v)$ is of the form
\begin{equation}\label{form}
u(x)=k C_0 |x|^{-\frac{2}{p-1}},~~~ v(x)=l C_0 |x|^{-\frac{2}{p-1}},
\end{equation}
where $C_0$ is given by \eqref{SE-02} and   $k, l \geq 0 $ (not all 0)   satisfy  \eqref{T011}.

\end{itemize}

\end{theorem}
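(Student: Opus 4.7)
The plan rests on the scaling invariance of \eqref{SPS}. For $\lambda>0$, set $u_\lambda(x):=\lambda^{2/(p-1)}u(\lambda x)$ and $v_\lambda(x):=\lambda^{2/(p-1)}v(\lambda x)$; these again solve \eqref{SPS} on $\R^n\bs\{0\}$, and the exponents appearing in \eqref{MF} are chosen precisely so that $E(\lambda r;u,v)=E(r;u_\lambda,v_\lambda)$. By Proposition \ref{P-01} and Lemma \ref{L-02}, $E(\cdot;u,v)$ is nondecreasing and bounded, so both $E(0;u,v)$ and $E(\infty;u,v)$ exist, and they govern the behavior of $(u_\lambda,v_\lambda)$ as $\lambda\to 0^+$ and $\lambda\to+\infty$ respectively.

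The core ingredient is a rigidity statement: if $E(\cdot;u,v)$ is constant in $r$, then $u$ and $v$ are homogeneous of degree $-2/(p-1)$. I would establish this by inspecting the derivation of the monotonicity formula and extracting from $\frac{d}{dr}E(r;u,v)$ a non-negative dissipation term proportional to $\int_{\partial B_r}\bigl(|\partial_r(r^{2/(p-1)}u)|^2+|\partial_r(r^{2/(p-1)}v)|^2\bigr)$; its vanishing forces the Emden--Fowler transforms of $u$ and $v$ to be independent of the radial variable. Any homogeneous nonnegative solution of \eqref{SPS} must then take the form
\[
u(x)=kC_0|x|^{-\frac{2}{p-1}},\qquad v(x)=lC_0|x|^{-\frac{2}{p-1}},
\]
because substitution into \eqref{SPS} reduces matters to an algebraic equation on $S^{n-1}$ whose only nonnegative solutions are constant angular profiles, at which point \eqref{T011} is exactly the compatibility condition. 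Plugging these explicit profiles into \eqref{MF} then gives, after a direct calculation, $E(r;u,v)\equiv -\frac{p-1}{2(p+1)}(k^2+l^2)C_0^{p+1}$.

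With rigidity in hand, I would carry out a blow-up/blow-down analysis to determine $E(0;u,v)$ and $E(\infty;u,v)$ for a general nonnegative solution. The sharp a priori estimate for \eqref{SPS} (proved separately in the paper) makes $\{(u_\lambda,v_\lambda)\}$ precompact in $C^2_{\loc}(\R^n\bs\{0\})$ as $\lambda\to 0^+$ or $\lambda\to+\infty$; any subsequential limit $(u_*,v_*)$ is a nonnegative solution of \eqref{SPS} satisfying $E(r;u_*,v_*)=\lim_{\lambda}E(\lambda r;u,v)\equiv E(0;u,v)$ (respectively $E(\infty;u,v)$), so $E(\cdot;u_*,v_*)$ is constant. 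The rigidity step then forces $(u_*,v_*)$ to be either trivial or of the form \eqref{form}, which places $E(0;u,v)$ and $E(\infty;u,v)$ in the stated discrete set.

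For the two \emph{iff} assertions: if $E(r;u,v)\equiv c\neq 0$, rigidity gives \eqref{form} immediately, and the converse is the explicit evaluation above. If $E(0;u,v)=0$, monotonicity gives $E(r;u,v)\geq 0$ for all $r>0$, while $E(\infty;u,v)$ is constrained to $\{0,-\frac{p-1}{2(p+1)}(k^2+l^2)C_0^{p+1}\}$ and must dominate $E(0;u,v)=0$, so both limits vanish and $E\equiv 0$; rigidity then makes $(u,v)$ homogeneous of the form \eqref{form}, and the explicit value of $E$ on such solutions forces $k=l=0$, i.e., $u=v\equiv 0$. The main obstacle is the rigidity lemma itself: isolating the correct dissipation structure inside $\frac{d}{dr}E(r;u,v)$ and, secondarily, excluding nonconstant angular profiles for homogeneous nonnegative solutions on $S^{n-1}$. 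The compactness feeding the blow-up step is also delicate but is furnished by the sharp global estimate proved elsewhere in the paper.
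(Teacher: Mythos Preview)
Your overall architecture---scaling invariance, blow-up/blow-down, rigidity from the vanishing of the dissipation in $\frac{d}{dr}E$, and the monotonicity squeeze for the iff statements---matches the paper's proof essentially step for step. The dissipation term you describe is exactly Proposition~\ref{P-01}, and the compactness input is Lemma~\ref{L-01} (the upper bound $u,v\le C|x|^{-2/(p-1)}$), not the full sharp two-sided estimate of Theorem~\ref{T02}; the latter actually depends on Theorem~\ref{T01}, so invoking it here would be circular.

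The one substantive divergence is your treatment of the angular profile. A homogeneous solution $u=|x|^{-2/(p-1)}\phi(\theta)$, $v=|x|^{-2/(p-1)}\psi(\theta)$ does \emph{not} reduce to an algebraic equation on $S^{n-1}$; it yields a coupled semilinear elliptic system $-\Delta_{S^{n-1}}\phi+\sigma\phi=\mu_1\phi^{2q+1}+\beta\phi^q\psi^{q+1}$ (and symmetrically for $\psi$). Proving that nonnegative solutions of this sphere system are constant is a genuine Liouville-type statement, and for coupled systems it is not available off the shelf. The paper sidesteps this entirely: it invokes the radial symmetry result (Theorem~\ref{R-S}, proved by moving planes in Section~2), which applies because a nontrivial homogeneous solution necessarily blows up at the origin; radiality then forces $\phi,\psi$ to be constants and \eqref{T011} drops out. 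This is both shorter and avoids the obstacle you flag. If you pursue your route, you would need to supply the sphere Liouville theorem for the system, which is nontrivial; the paper's use of Theorem~\ref{R-S} is the cleaner path.
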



\begin{remark}\label{R-01}
We point out that if one of $k$ and $l$ is 0, such as $k=0$, then system of equations \eqref{T011}  is understood as  a single equation $\mu_2 l^{2q}=1$.
\end{remark}

\begin{remark}\label{R-02}
It is easy to see that the non-negative solution of \eqref{T011}, which is not all 0,  is not unique.   This fact determines that the behavior of positive solutions of system \eqref{SPS} is much more complicated than that of  single equation \eqref{SE}.
\end{remark}

From now on, we denote
$$
A_{k,l}= \frac{p-1}{2(p+1)} (k^2 + l^2) C_0^{p+1}.
$$
Theorem \ref{T01} shows that if $(u, v)$ is a positive solution of \eqref{SPS} with $\frac{n}{n-2} < p < \frac{n+2}{n-2}$,  then it is either  both-singular  or  semi-singular at 0, and $E(0;u,v)=- A_{k,l}<0$.

\vskip0.13in

Our second result is concerned with the asymptotic behaviors of  positive solutions near 0 and near  $\infty$.  Just like the critical case in \cite{CL}, a basic open question is {\it whether positive semi-singular  solutions at 0 exist or not}.    Since the system \eqref{SPS} with $p=\frac{n+2}{n-2}$ is conformally invariant,  the behaviors of positive singular  solutions near  0 and near $\infty$ is equivalent, see \cite{CL}.  For the subcritical case $\frac{n}{n-2} < p < \frac{n+2}{n-2}$,  the system \eqref{SPS} is not conformally invariant,  the behaviors of positive singular  solutions near  0 and near $\infty$ is very different.

\vskip0.10in

Let $u$ be a positive solution of equation \eqref{SE} with $\frac{n}{n-2} < p <\frac{n+2}{n-2}$, by Theorem 3.6 in \cite{Gidas2}, then either the singularity at $\infty$ is removable, i.e., there exists a constant $c >0$ such that
\begin{equation}\label{SE1-19}
u(x)  \leq c |x|^{-(n-2)},~~~~ |x|\geq 1,
\end{equation}
or there exist constants $0< c_1\leq c_2$ such that
\begin{equation}\label{SE2-28}
c_1 |x|^{-\frac{2}{p-1}} \leq u(x) \leq c_2 |x|^{-\frac{2}{p-1}},~~~~|x|\geq 1.
\end{equation}
In order to describe the behavior of positive solutions  of system \eqref{SPS} near $\infty$,  we  introduce the following definition. 

\begin{definition}
Assume $\frac{n}{n-2} < p < \frac{n+2}{n-2}$. we  say that   $(u, v)$ is a positive semi-singular   solution of \eqref{SPS} at $\infty$ if either 
$$ 
u(x)\leq C |x|^{-(n-2)} ~~~ \textmd{and} ~~~  C_1|x|^{-\frac{2}{p-1}} \leq v(x) \leq C_2 |x|^{-\frac{2}{p-1}} ~~ \textmd{near}~ \infty,
$$
or 
$$ 
v(x)\leq C |x|^{-(n-2)} ~~~ \textmd{and} ~~~   C_1|x|^{-\frac{2}{p-1}} \leq u(x) \leq C_2 |x|^{-\frac{2}{p-1}}~~ \textmd{near}~  \infty.
$$ 
\end{definition}
A new question is {\it whether positive semi-singular solutions of \eqref{SPS} at $\infty$ exist or not}. 

\begin{theorem}\label{T02}
Let $\mu_1, \mu_2, \beta >0$ and $\frac{n}{n-2} < p < \frac{n+2}{n-2}$.
\begin{itemize}
\item [(1)]  System \eqref{SPS} has no positive semi-singular  solutions at 0 whenever $n \geq 4$.

\item [(2)]  Assume $n \geq 3$. In particular, if $n=3$, we suppose additionally that the system \eqref{SPS} has no positive semi-singular  solutions at 0.  \\
Then, for any positive solution $(u, v)$ of \eqref{SPS},  there exist constants $0< C_1\leq C_2$ such that
\begin{equation}\label{T021}
C_1 |x|^{-\frac{2}{p-1}} \leq u(x), ~ v(x) \leq C_2 |x|^{-\frac{2}{p-1}},~~~~x\in B_1 \backslash \{0\}.
\end{equation}

\item [(3)] System \eqref{SPS} has no positive semi-singular  solutions at $\infty$ whenever $n \geq 4$.

\item [(4)] Assume $n \geq 3$. In particular, if $n=3$, we suppose additionally that the system \eqref{SPS} has no positive semi-singular  solutions at $\infty$. \\
Then, for any positive solution $(u, v)$  of \eqref{SPS},  either the singularity at $\infty$ is removable, i.e., there exists $C_3>0$ such that
\begin{equation}\label{T022}
u(x), ~ v(x) \leq C_3 |x|^{-(n-2)},~~~~ |x|\geq 1,
\end{equation}
or there exist constants $0< C_1\leq C_2$ such that
\begin{equation}\label{T023}
C_1 |x|^{-\frac{2}{p-1}} \leq u(x), ~ v(x) \leq C_2 |x|^{-\frac{2}{p-1}},~~~~|x|\geq 1.
\end{equation}

\end{itemize}
\end{theorem}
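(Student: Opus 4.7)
Parts~(1) and (3) are the core of the theorem: once the nonexistence of semi-singular solutions is established, Parts~(2) and (4) follow from a blow-up analysis combined with Theorem~\ref{T01}. The uniform plan for both nonexistence statements is to rescale, use the scale invariance of $E$ and Theorem~\ref{T01} to identify the asymptotic profile of the unbounded component, and then leverage the coupling $\beta u^qv^{q+1}$ to derive a contradiction with the boundedness of the other component -- an obstruction that becomes available precisely when $n\ge 4$.

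\textbf{Part (1).} Assume for contradiction that $(u,v)$ is semi-singular at $0$ with $u$ bounded and $v\to\infty$ as $x\to 0$ (the other case is symmetric). First, since each component satisfies $-\Delta w\ge\mu_i w^p$, a standard comparison argument yields the a priori upper bound $u(x),\,v(x)\le C|x|^{-2/(p-1)}$ for small $|x|$. Next, set $u_\lambda(y):=\lambda^{2/(p-1)}u(\lambda y)$, $v_\lambda(y):=\lambda^{2/(p-1)}v(\lambda y)$, which solve the same system on $\mathbb R^n\setminus\{0\}$. Since $u$ is bounded and $2/(p-1)>0$, one has $u_\lambda\to 0$ locally uniformly as $\lambda\to 0$, while the upper bound and elliptic regularity give $v_\lambda\to V$ subsequentially in $C^2_{\loc}(\mathbb R^n\setminus\{0\})$, with $V$ solving the decoupled equation $-\Delta V=\mu_2V^{2q+1}$. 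The scale invariance of $E$ and Theorem~\ref{T01} pin down $V(y)=l_0C_0|y|^{-2/(p-1)}$ with $\mu_2 l_0^{2q}=1$, and in particular $v(x)\ge c_0|x|^{-2/(p-1)}$ for all small $|x|$.

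Since $u$ is bounded and superharmonic (in fact $-\Delta u>0$ strictly), it extends across $0$ to a nonnegative superharmonic function on a small ball; the strong minimum principle forbids $u(0)=0$, so $u\ge c_1>0$ in some $B_\rho$, and hence
\[
-\Delta u\;\ge\;\beta c_1^q c_0^{q+1}\,|x|^{-(q+1)/q}\qquad\text{in }B_\rho\setminus\{0\}.
\]
Writing $\bar u(r):=|\partial B_r|^{-1}\int_{\partial B_r}u\,d\sigma$ and $\phi(r):=-r^{n-1}\bar u'(r)\ge 0$, the radial identity $-(r^{n-1}\bar u'(r))'=r^{n-1}\overline{-\Delta u}$ gives $\phi'(r)\ge\kappa\, r^{n-1-(q+1)/q}$. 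If $\phi(0+)>0$ then $-\bar u'\gtrsim r^{1-n}$ and a single integration forces $\bar u(r)\to\infty$ for every $n\ge 3$. If $\phi(0+)=0$, then integrating the ODE from $0$ gives $\phi(r)\gtrsim r^{n-(q+1)/q}$ and hence $-\bar u'(r)\gtrsim r^{-1/q}$; for $n\ge 4$ one has $q<1$, $1/q>1$, and a second integration again forces $\bar u(r)\to\infty$. Either way the boundedness of $u$ is contradicted. The main obstacle is precisely the second case: when $n=3$ one has $q>1$, the integral $\int_0 s^{-1/q}\,ds$ converges, and the argument fails - this is the source of the extra hypothesis at $n=3$ in the statement.

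\textbf{Parts (2)--(4).} Part (3) is run analogously at $\infty$; since the subcritical system is not conformally invariant, the argument is performed directly (without Kelvin transform), again exploiting the divergence of $\int^\infty s^{-1/q}\,ds$ under $q<1$. For Part (2), Part~(1) (or the hypothesis in $n=3$) rules out semi-singular behaviour, so any positive solution is both-singular at $0$. The rescaled $(u_\lambda,v_\lambda)$ then converges subsequentially to a limit of the form $(kC_0|y|^{-2/(p-1)},lC_0|y|^{-2/(p-1)})$ with $(k,l)$ satisfying \eqref{T011}, and the exclusion of $k=0$ or $l=0$ (either of which would force one component to follow the semi-singular profile at the origin) produces the lower bound in \eqref{T021}; the matching upper bound is the a priori barrier estimate from the first step of Part (1). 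Part (4) is the same argument at infinity; the dichotomy \eqref{T022}/\eqref{T023} reflects the two admissible values $\{0,-A_{k,l}\}$ of $E(\infty;u,v)$ in Theorem~\ref{T01}, the value $0$ corresponding to a removable singularity at $\infty$ for which comparison with the harmonic function $|x|^{-(n-2)}$ yields \eqref{T022}.
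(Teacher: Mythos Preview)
Your outline for Part~(1) is essentially the paper's argument: bound the unbounded component below by $c|x|^{-2/(p-1)}$ via the monotonicity of $E$ and Theorem~\ref{T01}, bound the other component below by a positive constant, then integrate the equation for the bounded component and use $q<1$ (i.e.\ $1/q>1$) to force blow-up. That part is fine.

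The real gaps are in Parts~(2)--(4). In Part~(2) you assert that if a blow-up limit has $k=0$ then ``one component follows the semi-singular profile at the origin''. This is not justified: $k=0$ along some sequence $\lambda_j\to 0$ only tells you $\lambda_j^{2/(p-1)}u(\lambda_j)\to 0$, which is far weaker than $u$ being bounded near~$0$. A positive solution can perfectly well satisfy $u(x)\to\infty$ while $\liminf_{x\to 0}|x|^{2/(p-1)}u(x)=0$, and excluding semi-singular solutions does \emph{not} rule this out. The paper closes this gap by a genuine ODE analysis (Case~1 and Case~2 in the proof of (2)): assuming $\liminf_{t\to\infty} w_1(t)=0$, one first upgrades this to $\lim_{t\to\infty} w_1(t)=0$ (using the structure of the local minima of $w_1$ and the sign of $q-1$), and then for $n\ge 4$ derives a convexity/monotonicity contradiction from \eqref{S301}, while for $n=3$ one shows (via a comparison with an exponential and a further blow-up) that $|x|^{2/(p-1)}u(x)\to 0$, whence Lemma~\ref{L4-02} makes $u$ bounded --- only at that point does the ``no semi-singular'' hypothesis bite. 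Your blow-up sketch skips all of this.

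For Part~(3), you propose to argue ``directly, without Kelvin transform''. But the mechanism from Part~(1) relied on the bounded component being bounded \emph{below} by a positive constant near~$0$; at infinity the ``bounded'' component satisfies $u(x)\le C|x|^{-(n-2)}\to 0$, so you lose the lower bound on $u^q$ and the integration argument does not go through as stated. The paper instead applies the Kelvin transform, converting the question back to one near the origin for the weighted system \eqref{SPS-01}, where the bounded component is again bounded below and the same integration scheme (with a different critical exponent) yields the contradiction. Similarly, for Part~(4) the dichotomy does correspond to the two values of the limiting energy, but deducing the sharp decay $u,v\le C|x|^{-(n-2)}$ from the value $0$ is not ``comparison with the harmonic function'': the paper carries out a careful ODE argument (Step~1 of the proof of (4)) showing first that $\bar w=\bar w_1+\bar w_2$ is eventually convex and decreasing, and then that $e^{\delta_0 t}\bar w(t)$ stays bounded via a differential inequality for $\bar z=\bar w'+\delta_0\bar w$. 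The negative-energy branch (Step~2) again requires the full ODE analysis analogous to Part~(2). Your sketch for (3) and (4) does not supply these arguments.
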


Theorem \ref{T02} gives a classification of positive solutions to system \eqref{SPS} whenever $n\geq 4$. In a subsequent work \cite{YZ1}, we will study the asymptotic symmetry and classification of isolated singularities of positive solutions to system \eqref{SPS} in a {\it punctured ball}.  We remark that above theorems are very important to study these problems in a punctured ball.   We also remark that the single equation \eqref{SE} $(\frac{n}{n-2} < p < \frac{n+2}{n-2})$  in a punctured ball or in punctured space was well studied in the seminal papers \cite{CGS, Gidas2}.


\vskip0.13in

Theorem \ref{T02} shows that, for $n\geq 4$, 0 (or $\infty$) is a non-removable singularity of $u$ if and only if 0  (resp. $\infty$) is a non-removable singularity of $v$, but we don't know whether this conclusion holds or not for $n=3$.  We tend to believe that it also holds for $n=3$, by which we obtain the sharp estimate of positive solutions. These are some essential differences between the system \eqref{SPS} and the scalar equation \eqref{SE}.

\vskip0.1in

Remark that, in Theorem \ref{T02}, we get the {\it sharp} estimates for  {\it both  components} of  positive solutions.  In a very recent paper \cite{GKS}, Ghergu, Kim and Shahgholian  studied the positive singular solutions of system
\begin{equation}\label{GKS}
-\Delta {\bf u}=|{\bf u}|^{p-1} {\bf u}
\end{equation}
in a punctured ball, where ${\bf u}=(u_1, \cdots, u_m)$. When $\frac{n}{n-2} < p < \frac{n+2}{n-2}$, they only obtain the similar sharp estimate for $|{\bf u}(x)|$ as a whole  {\it rather than} each component $u_i$ of the  positive vector solutions ${\bf u}$.     We also point out that, the sharp estimates of singular  solutions for the scalar equation \eqref{SE} is relatively  simple. However, coupled system \eqref{SPS} turns out to be much more delicate  and complicated than \eqref{SE}.  Further, in the first equation of \eqref{SPS}, the power of $u$ in the coupling term is $q$, which is $>$1 if $n=3$ and $<1$ if $n \geq 4$. This fact makes the argument depending heavily on the dimension. This is another  important  difference between \eqref{SPS} and \eqref{SE}.

\vskip0.1in

Assume $\frac{n}{n-2} < p < \frac{n+2}{n-2}$ and let $W$ be a  positive  solution of \eqref{SE}, then $(kW, lW)$ is a positive both-singular  solution of \eqref{SPS}, where $k, l>0$ satisfy \eqref{T011}. Conversely,  there is an  interesting problem remaining: {\it whether any  positive solutions are of the form $(kW, lW)$, where $W$ is a  solution of \eqref{SE}}?  This question seems very tough.   We remark that there are some papers to study the proportionality of components ( $u/v\equiv $ constant)  of  {\it entire} positive solutions for others systems, such as see \cite{Chen-Li,F,MSS,QS},  but we can not obtain the conclusion $u/v\equiv constant$ to system \eqref{SPS} via the ideas of these papers, since \eqref{SPS} does not satisfy the structural conditions  in these papers, and system \eqref{SPS} has an isolated singularity 0.
Therefore, system \eqref{SPS} can not be reduced to a single equation. We will prove the radial symmetry of positive singular  solutions of \eqref{SPS} and then prove our result by analyzing an ODE system, which turns out to be very delicate   and complicated.  Remark that  the ODE system corresponding to the subcritical case in this paper  is very different from the critical case in \cite{CL},  and some new observations and ideas are needed.  In particular, a very important monotonicity property of ODE system in \cite{CL}  does not exist in our problem.  See Sect. 3 for a detailed explanation.

\vskip0.1in

The rest of this paper is structured as follows. Sect. 2 is devoted to prove that positive singular solutions of \eqref{SPS} are radially symmetric via the method of moving planes.    In Sect. 3,  we establish the monotonicity formula and some crucial lemmas. Theorem \ref{T01} and \ref{T02} are proved in Sect. 4.   We will see that our arguments are much more delicate than the scalar equation  and are also very different with  the critical system.  We will denote positive constants (possibly different in different lines) by $C, \bar{C}, C_1, C_2, \cdots$.

\section{Radial symmetry}
In this section we apply the method of moving planes to prove the radial symmetry of positive singular solutions, which extend the classical result in Caffarelli-Gidas-Spruck  \cite{CGS} to system \eqref{SPS}. We may also see \cite{CL} for this extension to \eqref{SPS} with $p=\frac{n+2}{n-2}$.  Since we will use the Kelvin transform, it does not seem trivial to get the strictly decreasing of $u(r)$ and $v(r)$ for $r=|x|$ from this proof. We will prove  this strictly decreasing property via some estimates in Lemma \ref{L-03} in Sect.  3.
\begin{theorem}\label{R-S}
Let $(u, v)$ be a  positive solution  of \eqref{SPS} with $\frac{n}{n-2} < p \leq \frac{n+2}{n-2}$. Assume that $\limsup_{|x| \to 0} u(x) =+\infty$ or $\limsup_{|x| \to 0} v(x) =+\infty$. Then both $u$ and $v$ are radially symmetry about the origin and are strictly decreasing about $r=|x|>0$.
\end{theorem}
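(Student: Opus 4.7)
The plan is to adapt the Caffarelli--Gidas--Spruck moving planes argument \cite{CGS} to the coupled cooperative system \eqref{SPS} via a Kelvin transform centered at the singularity. First I would define
\[
\ti u(x):=|x|^{2-n}u(x/|x|^2),\qquad \ti v(x):=|x|^{2-n}v(x/|x|^2),
\]
and check by direct computation that $(\ti u,\ti v)$ satisfies the weighted system
\[
-\Delta\ti u=|x|^{-\tau_0}\bigl[\mu_1\ti u^{2q+1}+\beta\ti u^{q}\ti v^{q+1}\bigr],\quad
-\Delta\ti v=|x|^{-\tau_0}\bigl[\mu_2\ti v^{2q+1}+\beta\ti v^{q}\ti u^{q+1}\bigr],
\]
in $\R^n\setminus\{0\}$, where $\tau_0:=(n+2)-(n-2)p\ge 0$ by the hypothesis $p\le(n+2)/(n-2)$. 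The key observation is that $|x|^{-\tau_0}$ is radially non-increasing: for any $\la<0$ and $x\in\Sigma_\la:=\{x_1<\la\}$, its reflection $x^\la=(2\la-x_1,x_2,\ldots,x_n)$ across $T_\la=\{x_1=\la\}$ satisfies $|x^\la|<|x|$, so $|x^\la|^{-\tau_0}\ge |x|^{-\tau_0}$; this is exactly the favorable sign needed to drive the moving planes argument.

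Next, fixing a direction (WLOG $e_1$) and writing $W_\la(x):=\ti u(x^\la)-\ti u(x)$, $Z_\la(x):=\ti v(x^\la)-\ti v(x)$, I would derive cooperative linear differential inequalities
\[
-\Delta W_\la\ge c_{11}W_\la+c_{12}Z_\la,\qquad -\Delta Z_\la\ge c_{21}W_\la+c_{22}Z_\la,
\]
in $\Sigma_\la\setminus\{0^\la\}$ with $c_{12},c_{21}\ge 0$. The cooperative sign is inherited from the monotonicity of the coupling terms $u^qv^{q+1}$ and $v^qu^{q+1}$ in each argument, while the weight difference $|x^\la|^{-\tau_0}-|x|^{-\tau_0}\ge 0$ contributes a non-negative source. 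I would then show that for $\la<0$ with $|\la|$ sufficiently large, $W_\la,Z_\la\ge 0$ in $\Sigma_\la\setminus\{0^\la\}$: after excising a small ball around the reflected singularity $0^\la$ (where $\ti u_\la,\ti v_\la$ may blow up, but with a polynomial rate inherited from the blow-up of $(u,v)$ at $0$), a narrow-strip Alexandrov--Bakelman--Pucci argument applied on $\Sigma_\la\cap B_R$ for large $R$ yields the desired comparison.

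Set $\la_0:=\sup\{\mu\le 0:W_\la,Z_\la\ge 0 \text{ in } \Sigma_\la\setminus\{0^\la\} \text{ for all } \la\le\mu\}$. Assuming for contradiction that $\la_0<0$, continuity gives $W_{\la_0},Z_{\la_0}\ge 0$. I would apply the vector strong maximum principle (justified by the cooperative structure) and the Hopf lemma at $T_{\la_0}$ to obtain a uniform transverse monotonicity, and a standard perturbation then shows the comparison persists for $\la$ slightly larger than $\la_0$, contradicting the supremum--unless one of $W_{\la_0},Z_{\la_0}$ vanishes identically on a connected component. This latter alternative is ruled out because the original singularity $0\notin\Sigma_{\la_0}$ is a singularity only of $\ti u$, while the reflected singularity $0^{\la_0}\in\Sigma_{\la_0}$ is a singularity only of $\ti u_{\la_0}$; the two cannot coincide when $\la_0<0$. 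Hence $\la_0=0$. Applying the same argument in direction $-e_1$ yields symmetry of $\ti u,\ti v$ across $T_0=\{x_1=0\}$; arbitrariness of $e_1$ gives radial symmetry of $\ti u,\ti v$ about $0$, and inverting the Kelvin transform yields radial symmetry of $(u,v)$. The strict monotonicity $u'(r),v'(r)<0$ will be obtained in Section 3 via the sharp gradient estimates of Lemma \ref{L-03}, as the authors indicate.

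The main obstacle is the \emph{vector-valued} nature of the argument: because the two equations are coupled through $\ti u^{q}\ti v^{q+1}$ and $\ti v^{q}\ti u^{q+1}$, the scalar Caffarelli--Gidas--Spruck reasoning cannot be applied componentwise, and one must identify the cooperative sign of the off-diagonal coefficients and install a vector version of the strong maximum principle and the Hopf boundary-point lemma. A secondary delicate point is the presence of the reflected singularity $0^\la\in\Sigma_\la$: one must quantify the blow-up rate of $\ti u_\la,\ti v_\la$ at $0^\la$ (using the rate information on $(u,v)$ near $0$) precisely enough that a removable-singularity or capacity-type argument allows the maximum principle to proceed on $\Sigma_\la\setminus\{0^\la\}$ without obstruction.
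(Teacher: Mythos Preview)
Your proposal has a conceptual error in the placement of the Kelvin transform that undermines the argument. You center the Kelvin transform at the origin, i.e.\ at the singularity itself. Under $x\mapsto x/|x|^2$, the origin is sent to infinity and infinity is sent to the origin. Hence the assumed blow-up of $(u,v)$ at $0$ becomes the \emph{behavior of $(\ti u,\ti v)$ at infinity}, while the behavior of $(\ti u,\ti v)$ near $0$ reflects the (a~priori unknown) behavior of $(u,v)$ at infinity. Your repeated claim that the singularity of $\ti u$ at $0$ and at the reflected point $0^\la$ is ``inherited from the blow-up of $(u,v)$ at $0$'' is therefore incorrect, and with it the mechanism you propose for ruling out $W_{\la_0}\equiv 0$ collapses: you have no guarantee that $\ti u$ is singular at $0$ at all. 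Equally problematic is the start of the process: because the genuine singularity of $(u,v)$ has been pushed to infinity, $(\ti u,\ti v)$ need not have the harmonic asymptotic expansion at infinity that the CGS machinery uses to launch the planes, and a narrow-domain ABP argument on the unbounded half-space $\Sigma_\la$ cannot substitute for it without quantitative decay information you do not yet possess (Lemma~\ref{L-01} is proved \emph{after} Theorem~\ref{R-S} and its proof uses radial symmetry).

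The paper avoids all of this by centering the Kelvin transform at an \emph{arbitrary regular point} $z\neq 0$, obtaining $(U,V)$ with two finite singularities: one at $0$ (the Kelvin pole) and one at $z_0=-z/|z|^2$ (the image of the original singularity). Because $(u,v)$ is smooth and positive near $z$, $(U,V)$ has the standard harmonic asymptotic expansion at infinity, so CGS Lemma~2.3 applies directly to start the planes. The hypothesis $\limsup_{|x|\to0}u=+\infty$ translates into $\limsup_{x\to z_0}U=+\infty$, which immediately forces $U\not\equiv U^{\la^*}$ whenever $\la^*>0$; this is the non-identity step you were trying to secure. One then moves planes only in directions orthogonal to the axis through $0$ and $z_0$, obtaining axisymmetry of $(U,V)$ about that axis; since $z$ is arbitrary, $(u,v)$ is axisymmetric about every line through the origin and hence radial. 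If you wish to salvage your outline, relocate the Kelvin center to a regular point and follow this route.
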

\bp
We follow the idea  in  \cite{CGS} to use the method of moving planes.
Without loss of generality, we assume that $\limsup_{|x| \to 0} u(x) =+\infty$.
Fix an arbitrary point $z\neq 0$ and define the Kelvin transform
$$
U(x)=\frac{1}{|x|^{n-2}}u \left(z + \frac{x}{|x|^2} \right),~~~~~ V(x)=\frac{1}{|x|^{n-2}}v \left(z + \frac{x}{|x|^2} \right).
$$
Then $(U, V)$ satisfies
\begin{equation}\label{SPS0-11}
\begin{cases}
-\Delta U =|x|^{\alpha} \left( \mu_1 U^{2q+1} + \beta U^q V^{q+1}\right) \\
-\Delta V =|x|^{\alpha} \left( \mu_2 V^{2q+1} + \beta V^q U^{q+1} \right)
\end{cases} \textmd{in} ~\mathbb{R}^n \backslash \{0, z_0\},
\end{equation}
where $\alpha=p(n-2) - (n+2)$ and $z_0=-z/|z|^2$.  Clearly, $U$ and $V$ are singular at 0 and $z_0$.  Define an axis going through 0 and $z$, we shall show that both $U$ and $V$  are axisymmetric.   To this end, we consider any reflection direction $\eta$ orthogonal to this axis. We may assume, without loss of generality,  that $\eta=(0, \cdots, 0, 1)$ is the positive $x_n$ direction.   For any $\lambda >0$, let
$$
\Sigma_\lambda=\{ x\in \mathbb{R}^n : x_n > \lambda\},~~~~~~T_\lambda=\{ x\in \mathbb{R}^n : x_n = \lambda\}.
$$
We denote $x^\lambda=(x^\prime, 2\lambda-x_n)$ as the reflection of the point $x=(x^\prime, x_n)$ about  the plane $T_\lambda$.
Since $U$ and $V$ have the harmonic asymptotic expansion  (see (2.6) in \cite{CGS}) at infinity, by Lemma 2.3 in \cite{CGS}, there exist large positive constants $\lambda_0>10$ and  $R>|z_0| +10$ such that for any $\lambda \geq \lambda_0$, we have
\begin{equation}\label{PT21}
U(x) < U(x^\lambda),~~~~~~ V(x) < V(x^\lambda)~~~~~~~ \textmd{for} ~~ x\in \Sigma_\lambda,~~~~ |x^\lambda| >R.
\end{equation}
By the maximum principle for super harmonic functions with isolated singularities, cf. Lemma 2.1 in \cite{Che-L}, there exists $C>0$ such that
\begin{equation}\label{PT22}
U(x), V(x) \geq C~~~~~~~ \textmd{for} ~~ x\in \overline{B_R} \backslash \{0, z_0\}.
\end{equation}
Since $U(x), V(x) \to 0$ as $|x| \to +\infty$, by \eqref{PT21} and \eqref{PT22}, there exists $\lambda_1 > \lambda_0$ such that for any $\lambda \geq \lambda_1$, we have
\begin{equation}\label{PT23}
U(x) \leq U(x^\lambda),~~~~~~ V(x) \leq V(x^\lambda)~~~~~~~ \textmd{for} ~~ x\in \Sigma_\lambda,~~~~ x^\lambda \notin  \{0, z_0\}.
\end{equation}
Define
$$
\lambda^*:=\inf \{ \bar{\lambda} >0 ~ |~ \eqref{PT23} ~ \textmd{holds} ~ \textmd{for} ~  \textmd{all} ~ \lambda \geq \bar{\lambda}\}.
$$
Suppose $\lambda^* >0$. Then \eqref{PT23} also holds for $\lambda=\lambda^*$. Since $\limsup_{|x| \to 0} u(x)=+\infty$, we get $\limsup_{x \to z_0} U(x)=+\infty$ and hence $U(x) \not\equiv  U(x^{\lambda^*})$.  Define  $W_1(x)=U(x^{\lambda^*})$ and $W_2(x)=V(x^{\lambda^*})$ for $x\in \Sigma_{\lambda^*}$ and $x^{\lambda^*} \notin  \{0, z_0\}$. Then we have
$$
\begin{cases}
-\Delta (W_1- U) \geq 0 \\
-\Delta (W_2 - V) \geq \beta |x|^{\alpha}  V^q \big( W_1^{q+1} -U^{q+1} \big)
\end{cases} \textmd{for} ~~ x\in \Sigma_{\lambda^*},~~ x^{\lambda^*} \notin  \{0, z_0\}.
$$
By  the strong maximum principle we obtain
\begin{equation}\label{PT24}
U(x) <  W_1(x),~~~~~~ V(x) <  W_2(x)~~~~~~~ \textmd{for} ~~ x\in \Sigma_{\lambda^*},~~~~ x^{\lambda^*} \notin  \{0, z_0\}.
\end{equation}
Note that  $0, z_0 \notin T_{\lambda^*}$ because of $\lambda^* >0$. By the Hopf boundary lemma, we have for any $x\in T_{\lambda^*}$,
\begin{equation}\label{PT25}
\frac{\partial ( W_1- U)(x)}{\partial x_n} = -2 \frac{\partial U(x)}{\partial x_n} >0,~~~~\frac{\partial (W_2 - V)(x)}{\partial x_n} = -2 \frac{\partial V(x)}{\partial x_n} >0.
\end{equation}
By the definition $\lambda^*$, there exists $\lambda_j \to \lambda^*$ $(\lambda_j < \lambda^*)$  such that \eqref{PT23} does not hold for $\lambda=\lambda_j$. Without loss of generality and up to a subsequence, we assume that there exists $x_j \in \Sigma_{\lambda_j}$ such that $U(x_j^{\lambda_j}) < U(x_j)$. It follows Lemma 2.4 in \cite{CGS} (the plane $x_n=0$ there corresponds to $x_n=\lambda^*$ here) that  $|x_j|$ are uniformly bounded. Hence, up to a subsequence, $x_j \to \bar{x}\in \overline{\Sigma_{\lambda^*}}$ with $U(\bar{x}^{\lambda^*}) \leq U(\bar{x})$.  By \eqref{PT24},  we have $\bar{x} \in T_{\lambda^*}$ and then $\frac{\partial U}{\partial x_n} (\bar{x}) \geq 0$, a contradiction with \eqref{PT25}. Hence $\lambda^*=0$ and so both $U$ and $V$ are axisymmetric about the axis going through 0 and $z$. Since $z$ is arbitrary, $u$ and $v$ are both axisymmetric about any axis going through 0, and so $u, v$ are both radially symmetry about the origin. The strictly decreasing property of $u$ and $v$ will be proved via some estimates in Lemma \ref{L-03} in Sect. 3.
\ep

\section{Monotonicity formula and crucial lemmas}
In this section, we  establish the monotonicity formula and some crucial lemmas.  Let $(u, v)$ be a positive solution of \eqref{SPS}. By Theorem \ref{R-S} in Sect. 2 and Theorem 2 in \cite{RZ},  we may assume that $u(x)=u(|x|)$ and $v(x)=v(|x|)$ are radially symmetric functions.  Denote $\delta=\frac{2}{p-1}$.
We use the classical change of variables as in Fowler \cite{Fowler}. Let $t=-\ln r$ and
$$
w_1(t):=r^{\delta} u(r)= e^{-\delta t} u(e^{-t}),~~~~~~~~ w_2(t):=r^{\delta} v(r)= e^{-\delta t} v(e^{-t}).
$$
Then,  by a direct  calculation  $(w_1, w_2)$ satisfies
\begin{equation}\label{S301}
\begin{cases}
w_1^{\prime\prime} + \tau w_1^{\prime} -  \sigma w_1 + \mu_1 w_1^{2q+1} + \beta w_1^q w_2^{q+1}=0 \\
w_2^{\prime\prime} + \tau w_2^{\prime} -  \sigma w_2 + \mu_2 w_2^{2q+1} + \beta w_2^q w_1^{q+1}=0
\end{cases} t\in \mathbb{R},
\end{equation}
where $w_i^{\prime}:=\frac{d w_i}{dt}$, $w_i^{\prime\prime}:=\frac{d^2 w_i}{dt^2}$ and
$$
\tau=\frac{4}{p-1} -n + 2,~~~~~ \sigma=\frac{2}{p-1}\left(n-2-\frac{2}{p-1}\right).
$$
We note that $\tau, \sigma >0$ due to $\frac{n}{n-2} < p < \frac{n+2}{n-2}$.

\vskip0.1in

When $p=\frac{n+2}{n-2}$, we see that $\tau=0$ and $\sigma=\delta^2=\left(\frac{n-2}{2}\right)^2$.  Consider two functions $f_1, f_2 : \mathbb{R} \rightarrow \mathbb{R}$ by
$$
f_1(t):=-\frac{1}{2}|w_1^\prime|^2 +\frac{\delta^2}{2}w_1^2 -\frac{\mu_1}{p+1} w_1^{p+1},~f_2(t):=-\frac{1}{2}|w_2^\prime|^2 +\frac{\delta^2}{2}w_2^2 -\frac{\mu_1}{p+1} w_2^{p+1}.
$$
Then by \eqref{S301} we have
$$
f_1^{\prime}(t)=\beta w_1^{q} w_2^{q+1} w_1^\prime,~~~~~f_2^{\prime}(t)=\beta w_2^{q} w_1^{q+1} w_2^\prime.
$$
That is, for $i=1,2$,  {\it the monotonicity of $f_i$ is exactly the same as the monotonicity of $w_i$}. This  monotonicity property plays a very important role and is used frequently in \cite{CL}.  However, when $\frac{n}{n-2} < p < \frac{n+2}{n-2}$, we have $\tau >0$,  there is {\it no similar monotonicity  property}. In addition, we note that when $\tau=0$, system \eqref{S301} is invariant under reflection but our problem \eqref{S301}  is {\it not} because $\tau>0$.
Therefore, we need some new observations and different  ideas to deal with our ODE system \eqref{S301}.

\vskip0.1in

Multiplying the first equation of \eqref{S301} by $w_1^{\prime}$, the second equation of \eqref{S301} by $w_2^{\prime}$, we easily obtain the following identity
\begin{equation}\label{S302}
\Psi^{\prime} (t)=  - \tau \left[ (w_1^{\prime})^2 + (w_2^{\prime})^2 \right](t),
\end{equation}
where
\begin{equation}\label{S303}
\aligned
\Psi  (t)= & \frac{1}{2} \left( |w_1^{\prime}|^2  + |w_2^{\prime}|^2 - \sigma \left( w_1^2 + w_2^2 \right) \right) (t) \\
 &~ +\frac{1}{p+1} \left( \mu_1 w_1^{p+1} + 2\beta w_1^{q+1} w_2^{q+1} + \mu_2 w_2^{p+1} \right) (t).
\endaligned
\end{equation}
Therefore,  we have $\Psi^{\prime} (t) \leq 0$ in $\mathbb{R}$, namely $\Psi(t)$ is decreasing for $t\in \mathbb{R}$.  Indeed, by an easy computation, we can deduce from \eqref{MF} that $E(r; u, v)= \sigma_{n-1} \Psi  (t)$, where $t=-\ln r$ and $\sigma_{n-1}$ is the area of the unit sphere in $\mathbb{R}^n$. Hence we have
\begin{equation}\label{S304}
\frac{d}{dr} E(r; u, v)= -\frac{\sigma_{n-1}}{r} \Psi^{\prime} (t).
\end{equation}
From this, we easily obtain the following monotonicity formula.
\begin{proposition}\label{P-01}
Let $(u, v)$ be a nonnegative solution of \eqref{SPS} with $\frac{n}{n-2} < p \leq \frac{n+2}{n-2}$. Then $E(r; u, v)$ is nondecreasing for $r\in (0, \infty)$. Moreover,
\begin{equation}\label{S305}
\frac{d}{dr} E(r; u, v)= \tau r^{\tau} \int_{\partial B_r} \bigg[ \bigg( \frac{\partial u}{\partial \nu} + \frac{2}{p-1} \frac{u}{r}\bigg)^2 + \bigg( \frac{\partial v}{\partial \nu} + \frac{2}{p-1} \frac{v}{r} \bigg)^2 \bigg],
\end{equation}
where $\nu$ is the unit outer normal of $\partial B_r$.
\end{proposition}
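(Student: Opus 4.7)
The strategy is to reduce the proposition to the one-variable identity \eqref{S302} already derived for the Fowler ODE system. I plan to show that on radial profiles $E(r;u,v)$ coincides with $\sigma_{n-1}\Psi(t)\big|_{t=-\ln r}$, so that the chain rule together with \eqref{S302} gives both assertions at once.

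First I would reduce to the radial case. If some component of the nonnegative solution $(u,v)$ vanishes at an interior point of $\R^n\setminus\{0\}$, say $u(x_0)=0$, then the superharmonicity of $u$ (which is immediate from the first equation, since the right-hand side is nonnegative) together with the strong maximum principle forces $u\equiv 0$; the system collapses to the scalar equation \eqref{SE} for $v$, for which the analogous identity is classical. Otherwise $u,v>0$ on $\R^n\setminus\{0\}$, and either the origin is a non-removable singularity, in which case Theorem \ref{R-S} applies, or $(u,v)$ extends to a positive entire solution which must be trivial by the Liouville result of \cite{RZ}. In every case we may take $u=u(r)$, $v=v(r)$.

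Next I would expand $E(r;u,v)$ on such radial profiles. On $\partial B_r$ one has $\partial_\nu u=u'(r)$, $|\nabla u|^2=(u'(r))^2$, and the surface integrals contribute a factor $\sigma_{n-1}r^{n-1}$; together with the explicit prefactor $r^\tau$ in \eqref{MF} this produces $r^{\tau+n-1}$ in front of each term. Applying the Fowler change $t=-\ln r$, $w_1(t)=r^\delta u(r)$, $w_2(t)=r^\delta v(r)$ with $\delta=\frac{2}{p-1}$, one has $u=r^{-\delta}w_1$ and $u'(r)=-r^{-\delta-1}(w_1'+\delta w_1)$, and similarly for $v$. A direct substitution, in which all explicit powers of $r$ cancel thanks to the identity $\tau+n-1=2\delta+1$, then yields the clean equality $E(r;u,v)=\sigma_{n-1}\Psi(t)$.

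With this identification in hand, the rest is mechanical: the chain rule gives $\frac{d}{dr}E(r;u,v)=-\frac{\sigma_{n-1}}{r}\Psi'(t)$, substituting \eqref{S302} produces $\frac{\sigma_{n-1}\tau}{r}\bigl[(w_1'(t))^2+(w_2'(t))^2\bigr]$, and reconverting $(w_i'(t))^2 = r^{2\delta+2}\bigl(u'(r)+\frac{\delta}{r}u(r)\bigr)^2$ (resp.\ with $v$) together with $\int_{\partial B_r} f = \sigma_{n-1}r^{n-1}f(r)$ for radial $f$ yields exactly \eqref{S305}. Monotonicity follows because $\tau\ge 0$ throughout the range $p\le \tfrac{n+2}{n-2}$ and the integrand is a sum of squares. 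The only real obstacle is bookkeeping: making sure every coefficient of $u^2$, $u u'$, $(u')^2$, $u^{p+1}$, $u^{q+1}v^{q+1}$, etc., in \eqref{MF} lines up correctly with the corresponding term in \eqref{S303} after substitution. This works precisely because $\tau$, $\sigma$, and the weights in \eqref{MF} have been chosen so, so it is a careful but routine check.
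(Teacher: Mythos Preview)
Your proposal is correct and follows essentially the same route as the paper: the paper also reduces to radial profiles (via Theorem~\ref{R-S} and the Liouville result of \cite{RZ}), introduces the Fowler variables $(w_1,w_2)$, derives the energy identity \eqref{S302}--\eqref{S303}, and then records the identification $E(r;u,v)=\sigma_{n-1}\Psi(-\ln r)$ together with the chain rule \eqref{S304}, from which \eqref{S305} and the monotonicity drop out. Your write-up simply fills in the bookkeeping (the identity $\tau+n-1=2\delta+1$, the formula $w_i'(t)=-r^{\delta+1}\bigl(u'(r)+\tfrac{\delta}{r}u(r)\bigr)$, and the case split for genuinely nonnegative solutions) that the paper leaves implicit.
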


To prove  that $E(r; u, v)$ is bounded in $(0, +\infty)$,  we need the following estimates. There are many ways to prove them, for example, we can use the method in  Pol\'{a}\v{c}ik-Quittner-Souplet  \cite{Pol07},  and here we use a simple idea from  Lemma 4.2 in \cite{GKS}.

\begin{lemma}\label{L-01}
Let $(u, v)$ be a nonnegative solution of \eqref{SPS} with $\frac{n}{n-2} < p \leq \frac{n+2}{n-2}$. Then
\begin{equation}\label{S306}
u(x), v(x) \leq C |x|^{-\frac{2}{p-1}}~~~~ \textmd{in} ~ \mathbb{R}^n \backslash \{0\}
\end{equation}
and
\begin{equation}\label{S307}
|\nabla u(x)|, |\nabla v(x)| \leq C |x|^{-\frac{2}{p-1} -1}~~~~ \textmd{in} ~ \mathbb{R}^n \backslash \{0\},
\end{equation}
where both two constants $C$ depend only on $n, p, \mu_1, \mu_2$ and $\beta$.
\end{lemma}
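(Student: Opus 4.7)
The plan is to argue by contradiction using the classical doubling-rescaling technique of \cite{Pol07} together with the Liouville theorem of Reichel-Zou (Theorem 2 of \cite{RZ}), and then to obtain the gradient bound by a routine interior regularity rescaling. Set $M(x) := u(x)^{(p-1)/2} + v(x)^{(p-1)/2}$ and suppose the estimate $u+v \leq C|x|^{-2/(p-1)}$ fails; equivalently, that $|x|M(x)$ is unbounded on $\R^n \setminus \{0\}$. Pick a sequence $x_k$ with $|x_k| M(x_k) \to \infty$.

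Apply the Pol\'a\v{c}ik-Quittner-Souplet doubling lemma in $\R^n \setminus \{0\}$ (with distance to the boundary measured by $|x|$) to produce points $\tilde x_k \in \R^n\setminus\{0\}$ satisfying $M(\tilde x_k) \geq M(x_k)$, $|\tilde x_k| M(\tilde x_k) \geq k$, and
$$
M(y) \leq 2 M(\tilde x_k) \qquad \text{whenever } |y - \tilde x_k| \leq k/M(\tilde x_k).
$$
Set $\lambda_k := 1/M(\tilde x_k)$ and $\delta := 2/(p-1)$, and rescale via
$$
U_k(\xi) := \lambda_k^{\delta} u(\tilde x_k + \lambda_k \xi), \qquad V_k(\xi) := \lambda_k^{\delta} v(\tilde x_k + \lambda_k \xi).
$$
Scale invariance of \eqref{SPS} shows that $(U_k, V_k)$ solves the same system on $B_k(0)$ (the rescaled singularity $-\tilde x_k/\lambda_k$ lies outside $B_k$ since $|\tilde x_k|/\lambda_k \geq k$), with the uniform bound $U_k^{(p-1)/2} + V_k^{(p-1)/2} \leq 2$ on $B_k(0)$ and the normalization $U_k(0)^{(p-1)/2} + V_k(0)^{(p-1)/2} = 1$. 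Interior $L^q$ estimates followed by Schauder regularity yield uniform $C^{2,\alpha}_\loc$ bounds, and a diagonal subsequence converges in $C^2_\loc(\R^n)$ to a nonnegative classical solution $(U,V)$ of \eqref{SPS} on the whole of $\R^n$ with $U(0)+V(0) > 0$. This contradicts Theorem 2 of \cite{RZ}, establishing the pointwise bound \eqref{S306}.

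The gradient estimate \eqref{S307} then follows by a standard rescaling: for $x_0 \in \R^n \setminus \{0\}$ set $r = |x_0|/2$, $\tilde u(\xi) := r^{\delta} u(x_0 + r\xi)$ and $\tilde v(\xi) := r^{\delta} v(x_0 + r\xi)$ on $B_1(0)$; by the pointwise bound just proved, the right-hand side of the rescaled system is uniformly bounded on $B_1$, so interior Schauder estimates give $|\nabla \tilde u(0)| + |\nabla \tilde v(0)| \leq C$, which rescales back to the claimed $|\nabla u(x_0)| + |\nabla v(x_0)| \leq C|x_0|^{-\delta-1}$.

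The main obstacle is the critical endpoint $p = \frac{n+2}{n-2}$: there Theorem 2 of \cite{RZ} no longer applies, since the critical system admits nontrivial nonnegative entire solutions (generalized Aubin-Talenti bubbles), so the blow-up limit need not contradict Liouville. For this borderline case one must instead appeal to the corresponding classification of entire solutions for the critical system used in \cite{CL}, or exploit conformal invariance via a Kelvin transform to reduce to a regime where the above blow-up argument closes.
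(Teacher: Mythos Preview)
Your subcritical argument via the doubling lemma of \cite{Pol07} and the Liouville theorem of \cite{RZ} is correct, and the paper explicitly mentions this route as a valid alternative. The gradient estimate by rescaling is identical to the paper's.

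The paper, however, takes a different and more elementary path for \eqref{S306}, borrowed from \cite{GKS}: set $\widetilde u = u^{1-p}$, check that $\widetilde w(x) = \widetilde u(x) - \tfrac{\mu_1(p-1)}{2n}|x|^2$ is subharmonic in $\R^n\setminus\{0\}$ and bounded near the origin, and apply the Gilbarg--Serrin maximum principle \cite{G-S} for subsolutions with an isolated singularity to obtain $\inf_{\partial B_r} u \le C\, r^{-2/(p-1)}$. The radial symmetry already proved in Theorem~\ref{R-S} then turns this infimum bound into the pointwise bound. This argument needs no blow-up, no compactness, and no Liouville theorem.

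The practical payoff of the paper's approach is that it treats the full range $\frac{n}{n-2} < p \le \frac{n+2}{n-2}$ uniformly: nothing in the subharmonic computation or in Theorem~\ref{R-S} is sensitive to criticality. By contrast, your proof leaves $p=\frac{n+2}{n-2}$ genuinely open. You correctly diagnose that the blow-up limit may be a nontrivial bubble, so no Liouville contradiction is available, but the remedies you propose (invoking the classification in \cite{CL}, or a Kelvin reduction) are only gestured at and do not, as stated, close the argument---a bounded bubble limit is perfectly compatible with the doubling normalization, so the classification alone yields no contradiction. Since the lemma includes the critical endpoint, this is a real gap in your write-up, even though the subcritical part and the gradient step are sound.
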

\bp
By the strong maximum principle, we assume that $u>0$ in $\mathbb{R}^n \backslash \{0\}$. Since $u$ is superharmonic, it follows from Lemma 2.1 in \cite{Che-L} that
\begin{equation}\label{S308}
\liminf_{x\to 0} u(x) >0.
\end{equation}
Let $\widetilde{u}=u^{1-p}$. Then $\widetilde{u}$ satisfies
$$
\Delta \widetilde{u} \geq \frac{p}{p-1} \frac{ |\nabla \widetilde{u}| }{ \widetilde{u} }  + \mu_1 (p-1),~~~~ \textmd{in} ~ \mathbb{R}^n \backslash \{0\}.
$$
Hence, the auxiliary function
$$
\widetilde{w}(x)=\widetilde{u}(x) - \frac{\mu_1(p-1)}{2n} |x|^2
$$
is subharmonic in $\mathbb{R}^n \backslash \{0\}$.  By \eqref{S308}, $\widetilde{w}$ is bounded near the origin.  Thus, for any $r>0$,  it follows from Theorem 1 in \cite{G-S} that
$$
0 \leq \limsup_{x\to 0} \widetilde{w}(x) \leq \sup_{\partial B_r} \widetilde{w} = \sup_{\partial B_r} \widetilde{u} - \frac{\mu_1(p-1)}{2n} r^2.
$$
In terms of $u$, we have
$$
\inf_{\partial B_r} u \leq \left(\frac{\mu_1 (p-1)}{2n}\right)^{-\frac{1}{p-1}} r^{-\frac{2}{p-1}}.
$$
By the radial symmetry obtained in Theorem \ref{R-S},  we get the estimate of $u$ in \eqref{S306}. Using a similar argument, we can  get the estimate of $v$ in \eqref{S306}.

\vskip0.1in

For any $x_0 \in \mathbb{R}^n \backslash \{0\}$,  take $\lambda =\frac{|x_0|}{2}$ and define
$$
u_1(x)=\lambda^{\frac{2}{p-1}} u(x_0 + \lambda x),~~~~ v_1(x)=\lambda^{\frac{2}{p-1}} v(x_0 + \lambda x).
$$
Then $(u_1, v_1)$ satisfies the system \eqref{SPS} in $B_1$. By \eqref{S306}, $|u_1|, |v_1| \leq C$ in $B_1$. Standard gradient estimate gives
$$
|\nabla u_1 (0)|, ~ |\nabla v_1 (0)| \leq C.
$$
Rescaling back we get \eqref{S307}.
\ep

By the above lemma, one easily obtains the boundedness of $E(r; u, v)$ for $r \in (0, +\infty)$.  Let's omit the proof.
\begin{lemma}\label{L-02}
Let $(u, v)$ be a nonnegative solution of \eqref{SPS} with $\frac{n}{n-2} < p \leq \frac{n+2}{n-2}$. Then $E(r; u, v)$ is uniformly bounded for all $r \in (0, +\infty)$.
\end{lemma}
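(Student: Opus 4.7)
My plan is to prove the uniform boundedness of $E(r;u,v)$ directly from the a priori pointwise estimates in Lemma \ref{L-01}, via a term-by-term count of the powers of $r$ appearing in the integrand of \eqref{MF}. The key observation is that the functional $E$ has been designed precisely so that, for any solution saturating the sharp decay bound $|x|^{-2/(p-1)}$, each summand of the integrand has the critical homogeneity and the prefactor $r^{\tau}$ cancels the remaining power that survives after integration.

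First I would apply Lemma \ref{L-01} on $\partial B_r$: we have $u, v \leq C r^{-2/(p-1)}$ and $|\nabla u|, |\nabla v| \leq C r^{-2/(p-1)-1}$. Each of the six types of terms inside the square brackets of \eqref{MF} then has pointwise size $O(r^{-4/(p-1)-1})$. Indeed, $\left|u\,\partial_\nu u\right|$ and $\left|v\,\partial_\nu v\right|$ are bounded by $C r^{-4/(p-1)-1}$; the quadratic terms $r|\nabla u|^2$, $r|\partial_\nu u|^2$, and $\frac{1}{r} u^2$ (and likewise for $v$) have the same size; and since $\frac{2(p+1)}{p-1} = \frac{4}{p-1} + 2$, the nonlinear terms $r \mu_1 u^{p+1}$, $r \mu_2 v^{p+1}$, and $2\beta r u^{q+1} v^{q+1}$ also have size $O(r^{-4/(p-1)-1})$.

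Next I would integrate over $\partial B_r$, whose $(n-1)$-dimensional surface measure is $\sigma_{n-1} r^{n-1}$, to obtain an integral of size $O(r^{n-2-4/(p-1)}) = O(r^{-\tau})$. Multiplying by the prefactor $r^{\tau}$ yields $|E(r;u,v)| \leq C$, uniformly in $r \in (0, +\infty)$, which is the claim.

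No real obstacle arises here: the whole argument is routine bookkeeping of exponents once Lemma \ref{L-01} is in hand, which is presumably why the authors chose to omit the proof. In fact, the requirement that all six terms share a common homogeneity is exactly what forces the specific choice $\tau = \frac{4}{p-1} - n + 2$ in \eqref{MF} and explains why this is the correct monotonicity functional in the subcritical regime.
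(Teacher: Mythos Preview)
Your proof is correct and is precisely the argument the paper has in mind: the authors explicitly state that the boundedness follows easily from Lemma~\ref{L-01} and omit the details, and your term-by-term homogeneity count using the estimates \eqref{S306}--\eqref{S307} is exactly how one fills this in. The only remark is that the paper also records the identity $E(r;u,v)=\sigma_{n-1}\Psi(t)$ with $t=-\ln r$, which gives an equivalent (and slightly slicker) route to the same bound, but there is no substantive difference.
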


Another   consequence  of  the upper bound \eqref{S306} is the following Harnack inequality. 

\begin{lemma}\label{har}
Let $(u, v)$ be a nonnegative solution of \eqref{SPS} with $\frac{n}{n-2} < p \leq \frac{n+2}{n-2}$.  Then for all $r>0$, we have
\begin{equation}\label{har-01}
\sup_{B_{2r} \backslash B_r} (u + v) \leq C \inf_{B_{2r} \backslash B_r} (u + v),
\end{equation}
where $C$ depends only on $n, p, \mu_1, \mu_2$ and $\beta$.
\end{lemma}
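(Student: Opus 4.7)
The plan is to reduce the Harnack inequality to Moser's classical scalar Harnack inequality by a rescaling argument combined with the a priori upper bound of Lemma~\ref{L-01}. For each $r>0$ I define
$$
u_r(x):=r^{\frac{2}{p-1}}u(rx),\qquad v_r(x):=r^{\frac{2}{p-1}}v(rx),\qquad w_r:=u_r+v_r,
$$
so that $(u_r,v_r)$ again solves \eqref{SPS} on the fixed annulus $A:=\{x\in\mathbb R^n:1/2\le|x|\le 4\}$, and by Lemma~\ref{L-01} one has $u_r+v_r\le M$ on $A$ with $M=M(n,p,\mu_1,\mu_2,\beta)$ independent of $r$. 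The inequality \eqref{har-01} will follow from a Harnack estimate for $w_r$ on the subannulus $\{1\le|x|\le 2\}$ after undoing the scaling $x\mapsto x/r$.

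The key step is to pass to a linear scalar equation for the sum. Adding the two rescaled equations yields
$$
0\le -\Delta w_r = \mu_1 u_r^{2q}\,u_r + \mu_2 v_r^{2q}\,v_r + \beta u_r^q v_r^q\,w_r \le C\, w_r \qquad \text{on } A,
$$
where $C=C(M,\mu_1,\mu_2,\beta)$. By the strong maximum principle, either $(u,v)\equiv(0,0)$ (in which case \eqref{har-01} is trivial) or $w_r>0$ throughout $A$; assume the latter. Then $w_r$ is a positive solution of the linear equation
$$
-\Delta w_r + c_r(x)\,w_r = 0,\qquad c_r(x):=\frac{\Delta w_r(x)}{w_r(x)}\in[-C,0],
$$
with $L^\infty$ bound on $c_r$ independent of $r$.

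Next I invoke Moser's Harnack inequality for nonnegative solutions of such equations: for any concentric pair $B_\rho(y)\subset B_{2\rho}(y)\subset A$,
$$
\sup_{B_\rho(y)} w_r \le C_1\inf_{B_\rho(y)} w_r,
$$
with $C_1=C_1(n,C)$. Covering the compact subannulus $\{1\le|x|\le 2\}$ by finitely many such balls forming a connected chain inside $A$ and iterating the above inequality along the chain yields
$$
\sup_{\{1\le|x|\le 2\}} w_r \le C_2 \inf_{\{1\le|x|\le 2\}} w_r,
$$
with $C_2=C_2(n,p,\mu_1,\mu_2,\beta)$ independent of $r$. Rescaling $x\mapsto x/r$ turns this into \eqref{har-01}.

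The point requiring care, rather than a genuine obstacle, is the passage from the coupled system to a single scalar linear equation: the device is to pass to the sum $w=u+v$ and use the uniform upper bound of Lemma~\ref{L-01} (after rescaling) to absorb the nonlinearity into a bounded zero-order coefficient, thereby placing us within the scope of classical Moser theory and a routine chaining argument over the annular region.
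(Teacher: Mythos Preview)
Your proof is correct and follows essentially the same strategy as the paper: rescale to a fixed annulus, use the upper bound of Lemma~\ref{L-01} to absorb the nonlinearity into a bounded zero-order coefficient for the sum $u+v$, and then invoke the classical scalar Harnack inequality. The only cosmetic difference is that you use the scale-invariant substitution $u_r(x)=r^{2/(p-1)}u(rx)$ (so that $(u_r,v_r)$ again solves \eqref{SPS}), whereas the paper rescales without the prefactor and instead bounds the coefficient $a_r(x)=r^2 C(rx)z^{p-1}(rx)$ directly via Lemma~\ref{L-01}; the two are equivalent.
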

\bp
Let $z=u+v$, then $w$ satisfies the scalar equation
$$
-\Delta z= C(x) z^p~~~~~ \textmd{in}~ \mathbb{R}^n \backslash \{0\},
$$
where $C(x)$  is a bounded function for all $x\in \mathbb{R}^n \backslash \{0\}$.  For any $r>0$, let $z_r(x)=:z(rx)$, then $z_r$ satisfies
$$
-\Delta z_r= a_r(x) z_r ~~~~~ \textmd{in}~ \mathbb{R}^n \backslash \{0\},
$$
where $a_r(x):= r^2 C(rx)z^{p-1}(rx)$.  By Lemma \ref{L-01}, we have
$$
|a_r(x)| \leq  C |x|^{-2} \leq 4C~~~~~ \textmd{for}~  \frac{1}{2} \leq |x| \leq 4,
$$
where $C$ is a constant that depends only on $n, p, \mu_1, \mu_2$ and $\beta$.  Therefore, the classical Harnack  inequality gives
$$
\sup_{1\leq |x| \leq 2} z_r \leq C \inf_{1 \leq |x| \leq 2} z_r.
$$
Consequently, we get
$$
\sup_{B_{2r} \backslash B_r} (u + v) \leq C \inf_{B_{2r} \backslash B_r} (u + v).
$$
\ep

Now we prove the strictly decreasing property of positive solutions of \eqref{SPS}.
\begin{lemma}\label{L-03}
Let $(u, v)$ be a  positive solution of \eqref{SPS} with $\frac{n}{n-2} < p \leq \frac{n+2}{n-2}$. Then both $u^{\prime}(r) < 0$ and $v^{\prime}(r) < 0$ for all $r>0$.
\end{lemma}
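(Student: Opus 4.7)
The plan is to work with the radial profile of $(u,v)$. By Theorem~\ref{R-S} the solution is radially symmetric — since $(u,v)$ is positive, the Liouville-type theorem cited in case (1) of the introduction forces at least one component to be singular at the origin, so Theorem~\ref{R-S} applies and we may write $u=u(r)$, $v=v(r)$ with $r=|x|$. In these coordinates the first equation of \eqref{SPS} becomes
\begin{equation*}
(r^{n-1} u'(r))' \;=\; -r^{n-1}\bigl[\mu_1 u^{2q+1}(r) + \beta u^q(r) v^{q+1}(r)\bigr],
\end{equation*}
and analogously for $v$. Set $g(r):=r^{n-1}u'(r)$. Since $u,v>0$ everywhere, the right-hand side above is \emph{strictly} negative on $(0,\infty)$, so $g$ is strictly decreasing on the whole positive axis.

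The next step is to identify the boundary value $g(0^+)$. The pointwise gradient bound \eqref{S307} from Lemma~\ref{L-01} yields
\begin{equation*}
|g(r)| \;=\; r^{n-1} |u'(r)| \;\leq\; C\, r^{n-2-\frac{2}{p-1}}.
\end{equation*}
The hypothesis $p>\tfrac{n}{n-2}$ is equivalent to $\tfrac{2}{p-1}<n-2$, so the exponent $n-2-\tfrac{2}{p-1}$ is strictly positive; hence $g(r)\to 0$ as $r\to 0^+$.

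Combining the two observations, $g$ is strictly decreasing on $(0,\infty)$ with $\lim_{r\to 0^+}g(r)=0$. Therefore $g(r)<0$ for every $r>0$, which is exactly $u'(r)<0$ on $(0,\infty)$. Repeating the argument with the roles of $u$ and $v$ exchanged gives $v'(r)<0$ on $(0,\infty)$.

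I do not anticipate a real obstacle: the only substantive input is the gradient estimate \eqref{S307}, which is already in hand, and the only arithmetic point is the positivity of the exponent $n-2-\tfrac{2}{p-1}$, which is built into the subcritical range $\tfrac{n}{n-2}<p\le\tfrac{n+2}{n-2}$. It is worth emphasizing that strict positivity of $(u,v)$ is essential precisely to make $(r^{n-1}u')'$ strictly (not merely weakly) negative; without it one would only recover $u'(r)\le 0$, which is the conclusion for nonnegative radial superharmonic functions.
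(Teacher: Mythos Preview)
Your proof is correct and follows essentially the same approach as the paper: both arguments integrate the radial ODE for $r^{n-1}u'(r)$ and use the gradient estimate \eqref{S307} from Lemma~\ref{L-01} to show that the boundary contribution at $r=0^+$ vanishes (since $n-2-\tfrac{2}{p-1}>0$), yielding $r^{n-1}u'(r)=-\int_0^r \rho^{n-1}(\mu_1 u^{p}+\beta u^q v^{q+1})\,d\rho<0$. The paper phrases this via the divergence theorem on an annulus $B_r\setminus B_\epsilon$ and lets $\epsilon\to0$, while you work directly with the ODE and monotonicity of $g(r)=r^{n-1}u'(r)$, but the content is identical.
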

\bp
By the divergence theorem, for any $0 < \epsilon < r $, we have
\begin{equation}\label{S309}
-\int_{\partial B_r} \frac{\partial u}{\partial \nu} + \int_{\partial B_\epsilon} \frac{\partial u}{\partial \nu} = \int_{B_r \backslash B_\epsilon} \mu_1 u^p + \beta u^q v^{q+1},
\end{equation}
where $\nu$ is the unit outer normal of $\partial B_r$ and $\partial B_\epsilon$. By Lemma \ref{L-01},
$$
\int_{\partial B_\epsilon} \left | \frac{\partial u}{\partial \nu} \right| \leq C \epsilon^{n-\frac{2}{p-1} - 2} \to 0 ~~~~~ \textmd{as} ~ \epsilon \to 0.
$$
Thus, letting $\epsilon \to 0$ in \eqref{S309}, we get
$$
- \sigma_{n-1} u^\prime (r)  r^{n-1} = \int_{B_r} \mu_1 u^p + \beta u^q v^{q+1} >0,
$$
and hence $u^\prime (r) <0$ for all $r>0$.  Similarly, we also have $v^\prime (r) <0$ for all $r>0$.
\ep

As a result of Lemma \ref{L-03}, we have the following important corollary.

\begin{corollary}\label{C-01}
Let $(w_1, w_2)$ be a positive solution of \eqref{S301} with $\frac{n}{n-2} < p \leq \frac{n+2}{n-2}$. Then $w_i^\prime (t) > -\delta w_i(t)$ for all $t\in \mathbb{R}$ and $i=1, 2$.
\end{corollary}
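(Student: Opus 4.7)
The corollary is essentially a translation of the strict monotonicity statement of Lemma \ref{L-03} through the Fowler-type change of variables $t=-\ln r$, $w_1(t)=r^{\delta}u(r)$, $w_2(t)=r^{\delta}v(r)$. The plan is purely computational: differentiate $w_i$ in $t$, re-express the result in terms of $u'(r)$ (resp.\ $v'(r)$) and $w_i(t)$, and then invoke Lemma \ref{L-03}.

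\textbf{Key steps.} First I would compute, using the chain rule and $\frac{dr}{dt}=-r$,
\begin{equation*}
w_1'(t)=\frac{d}{dt}\bigl(e^{-\delta t}u(e^{-t})\bigr)=-\delta e^{-\delta t}u(e^{-t})-e^{-\delta t}u'(e^{-t})\,e^{-t}=-\delta w_1(t)-r^{\delta+1}u'(r),
\end{equation*}
so that
\begin{equation*}
w_1'(t)+\delta w_1(t)=-r^{\delta+1}u'(r).
\end{equation*}
Since $r=e^{-t}>0$, Lemma \ref{L-03} gives $u'(r)<0$ for every $r>0$, hence the right-hand side is strictly positive. This yields $w_1'(t)>-\delta w_1(t)$ for all $t\in\mathbb{R}$. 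The identical computation with $v$ in place of $u$ and $w_2$ in place of $w_1$ gives the corresponding inequality for $w_2$.

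\textbf{Anticipated difficulty.} There is essentially no obstacle: the two ingredients (the explicit change of variables and the sign of $u'$, $v'$) are already in hand, so the argument is a one-line calculation followed by an application of Lemma \ref{L-03}. The only thing to be careful about is to keep track that the inequality is \emph{strict}, which is guaranteed because Lemma \ref{L-03} produces the strict sign $u'(r)<0$, $v'(r)<0$ for \emph{every} $r>0$, not merely almost everywhere.
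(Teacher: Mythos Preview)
Your proposal is correct and essentially identical to the paper's own proof: both compute $w_1'(t)=-\delta r^{\delta}u(r)-r^{\delta+1}u'(r)=-\delta w_1(t)-r^{\delta+1}u'(r)$ via the chain rule and then invoke Lemma~\ref{L-03} to conclude that the second term is strictly positive. The only difference is cosmetic (you display the intermediate identity $w_1'(t)+\delta w_1(t)=-r^{\delta+1}u'(r)$ explicitly), so there is nothing further to compare.
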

\bp
Since $w_1(t)=r^\delta u(r)$ with $r=e^{-t}$,  Lemma \ref{L-03} gives
$$
w_1^\prime (t)=-(\delta r^\delta u(r) + r^{\delta +1} u^{\prime} (r)) > -\delta w_1(t).
$$
Similarly, we can get $w_2^\prime (t) > -\delta w_2(t)$ for all $t\in \mathbb{R}$.
\ep

\section{Positive solutions}
In this section,  we prove Theorems \ref{T01} and \ref{T02}.

\vskip0.1in

\noindent {\bf Proof of Theorem \ref{T01}.} {\it Step 1.} We first prove that $E(r; u, v)\equiv constant $ if and only if  $(u, v)$ is of the form \eqref{form}.

\vskip0.1in

If $E(r; u, v)$ is a constant for all $r>0$.  By Proposition \ref{P-01},
$$
\left( \frac{\partial u}{\partial \nu} + \frac{2}{p-1} \frac{u}{r}\right)^2 + \left( \frac{\partial v}{\partial \nu} + \frac{2}{p-1} \frac{v}{r}\right)^2 =0~~~~ \textmd{in} ~ \mathbb{R}^n \backslash \{0\}.
$$
Integrating in $r$ we get
$$
u(x)=|x|^{-\frac{2}{p-1}} u\left( \frac{x}{|x|} \right) ~~ \textmd{and} ~~ v(x)=|x|^{-\frac{2}{p-1}} v \left( \frac{x}{|x|} \right).
$$
This shows that $(u, v)$  is homogeneous of degree $-\frac{2}{p-1}$.  On the other hand, since $(u, v)$ is a nonnegative solution of \eqref{SPS} in $\mathbb{R}^n \backslash \{0\}$, by Theorem \ref{R-S}, then either $(u, v)$ is trivial, i.e., $u=v\equiv 0$, or $(u, v)$ is of the form
$$
u(x)= k C_0 |x|^{-\frac{2}{p-1}}, ~~ v(x)= l C_0 |x|^{-\frac{2}{p-1}},
$$
where $C_0$ is given by \eqref{SE-02} and $k, l \geq 0 $ (not all  0)  satisfy   \eqref{T011}. We notice that if one of $k$ and $l$ is 0, such as $k=0$, then
$l$ satisfies $\mu_2 l^{2q}=1$.  On the other hand,   if $(u, v)$has the form \eqref{form}, then,  a direct calculation shows that $E(r; u, v)\equiv constant$.

\vskip0.1in

{\it Step 2. } We compute the possible values  of $E(0; u, v)$ and $E(\infty; u, v)$.

\vskip0.1in

By Proposition \ref{P-01} and Lemma \ref{L-02}, we  know that the limit
$$
E(0; u, v)=\lim_{r\to0} E(r; u,v)
$$
exists. For any $\lambda >0$, define the blowing up sequence
$$
u^\lambda(x)=\lambda^{\frac{2}{p-1}} u(\lambda x),~~~v^\lambda(x)=\lambda^{\frac{2}{p-1}} v(\lambda x).
$$
Then $(u^\lambda, v^\lambda)$ is also a nonnegative solution of \eqref{SPS} in $\mathbb{R}^n \backslash \{0\}$. By Lemma \ref{L-01},  $(u^\lambda, v^\lambda)$ is uniformly bounded in each compact subset of $\mathbb{R}^n \backslash \{0\}$. It follows from the interior regularity that $(u^\lambda, v^\lambda)$ is uniformly bounded in $C^{2, \gamma} (K)$ on each compact set $K \subset \mathbb{R}^n \backslash \{0\}$,  for some $0< \gamma <1$.  Hence,  there exists a nonnegative function $(u^0, v^0)\in C^2(\mathbb{R}^n \backslash \{0\})$ and a subsequence $\lambda_j \to 0$ such that $(u^{\lambda_j}, v^{\lambda_j})$ converges to $(u^0, v^0)$ in $C_{loc}^2(\mathbb{R}^n \backslash \{0\})$,  and $(u^0, v^0)$ also satisfies  \eqref{SPS} in $\mathbb{R}^n \backslash \{0\}$.  Moreover, by the scaling invariance of $E$, for any $r>0$,  we have
\begin{equation}\label{S401}
E(r; u^0, v^0)= \lim_{j\to \infty} E(r; u^{\lambda_j}, v^{\lambda_j} ) = \lim_{j\to \infty} E(r \lambda_j;  u, v) =E(0; u, v).
\end{equation}
That is, $E(r; u^0, v^0)$ is a constant for all $r>0$.  By Step 1,  either $(u^0, v^0)$ is trivial, i.e.,  $u^0=v^0\equiv 0$, or $(u^0, v^0)$ is of the form
\begin{equation}\label{S402}
u^0(x)= k C_0 |x|^{-\frac{2}{p-1}}, ~~ v^0(x)= l C_0 |x|^{-\frac{2}{p-1}},
\end{equation}
where $C_0$ is given by \eqref{SE-02} and $k, l \geq 0 $ (not all  0)  satisfy   \eqref{T011}.
If  $(u^0, v^0)$ is trivial, by \eqref{S401}, then $E(0,u,v)=0$. If $(u^0, v^0)$ has the form \eqref{S402}, then a direct  calculation shows that
$$
E(r,u^0,v^0)=-\frac{p-1}{2(p+1)} (k^2 + l^2) C_0^{p+1}
$$
for all $r>0$. Thus , by \eqref{S401}, we get $E(0,u,v)=-A_{k,l}$.

\vskip0.1in
To get the possible value of $E(\infty; u, v)$,  the method is similar to the above argument. We  define the blowing down sequence
$$
u^\lambda(x)=\lambda^{\frac{2}{p-1}} u(\lambda x),~~~v^\lambda(x)=\lambda^{\frac{2}{p-1}} v(\lambda x),
$$
but in this case we will let $\lambda \to +\infty$. As in the above argument,  there exists a nonnegative function $(u^\infty, v^\infty)\in C^2(\mathbb{R}^n \backslash \{0\})$ and a subsequence $\lambda_j \to +\infty$ such that $(u^{\lambda_j}, v^{\lambda_j})$ converges to $(u^\infty, v^\infty)$ in $C_{loc}^2(\mathbb{R}^n \backslash \{0\})$,  and $(u^\infty, v^\infty)$ also satisfies  \eqref{SPS} in $\mathbb{R}^n \backslash \{0\}$.  By the scaling invariance of $E$, for any $r>0$,  we have
\begin{equation}\label{S403}
E(r; u^\infty, v^\infty)= \lim_{j\to \infty} E(r; u^{\lambda_j}, v^{\lambda_j} ) = \lim_{j\to \infty} E(r \lambda_j;  u, v) =E(\infty; u, v).
\end{equation}
The rest of the argument is the same as the proof for $E(0; u, v)$.  We also get
$$
E(\infty;u,v)  \in \left\{ 0, -\frac{p-1}{2(p+1)} (k^2 + l^2) C_0^{p+1} \right\}.
$$

\vskip0.1in

{\it Step 3. } We prove that $E(0; u, v) = 0$ if and only if $(u, v)$ is trivial.

\vskip0.1in

If $E(0;u, v)=0$,  then since $E(r; u, v)$ is nondecreasing for $r>0$ and $E(\infty;u,v)  \in \left\{ 0, -A_{k,l} \right\}$, we must have $E(r; u, v)=0$ for all $r>0$. By Step 1, this implies that either $(u, v)$ is  trivial,   or is of the form \eqref{form} with $k,l\geq 0$ but not all  0.
However, the latter gives $E(0; u, v)=-A_{k,l} <0$, a contradiction. Hence, $(u, v)$ must be trivial. The converse is obvious.

\vskip0.1in

We are now ready to prove Theorem \ref{T02}.  Let $(u, v)$ be a positive solution of \eqref{SPS}.  Obviously, Theorem \ref{T01} yields $E(0; u, v) =-A_{k,l} <0$. We begin with some lemmas.

\begin{lemma}\label{L4-01}
Let $(u, v)$ be a positive solution of \eqref{SPS} with $\frac{n}{n-2} < p < \frac{n+2}{n-2}$. Then
$$
\liminf_{|x| \to 0} |x|^{\frac{2}{p-1}} \left( u(x) + v(x) \right) >0.
$$
\end{lemma}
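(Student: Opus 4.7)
\textbf{Proof proposal for Lemma \ref{L4-01}.} The plan is to argue by contradiction using a blow-up analysis at $0$, and to leverage the classification of blow-up limits already obtained in Step~2 of the proof of Theorem \ref{T01} together with the energy identity $E(0;u,v)=-A_{k,l}<0$.

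Suppose, for contradiction, that $\liminf_{|x|\to 0}|x|^{2/(p-1)}(u(x)+v(x))=0$. By the radial symmetry of $(u,v)$ from Theorem \ref{R-S}, this reads $\liminf_{r\to 0^+} r^{2/(p-1)}(u(r)+v(r))=0$, so one can pick a sequence $r_j\to 0^+$ with $r_j^{2/(p-1)}(u(r_j)+v(r_j))\to 0$. Define the blow-up sequence
$$
u^{r_j}(x):=r_j^{2/(p-1)}u(r_j x),\qquad v^{r_j}(x):=r_j^{2/(p-1)}v(r_j x),
$$
which, by scale invariance, solves \eqref{SPS} on $\mathbb{R}^n\setminus\{0\}$. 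Lemma \ref{L-01} provides uniform $L^\infty_{\loc}$ bounds on $\mathbb{R}^n\setminus\{0\}$, so interior elliptic estimates give $C^{2,\gamma}_{\loc}$ bounds; passing to a subsequence we obtain $(u^{r_j},v^{r_j})\to(u^0,v^0)$ in $C^2_{\loc}(\mathbb{R}^n\setminus\{0\})$, where $(u^0,v^0)$ is a nonnegative solution of \eqref{SPS}. This is exactly the setup used in Step~2 of the proof of Theorem \ref{T01}.

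By construction, for any $x$ with $|x|=1$, the radial symmetry yields
$$
u^{r_j}(x)+v^{r_j}(x)=r_j^{2/(p-1)}\bigl(u(r_j)+v(r_j)\bigr)\longrightarrow 0,
$$
so the limit satisfies $u^0\equiv v^0\equiv 0$ on $\partial B_1$. However, the identity \eqref{S401} shows $E(r;u^0,v^0)\equiv E(0;u,v)$ for all $r>0$, so by Step~1 of the proof of Theorem \ref{T01} the pair $(u^0,v^0)$ is either trivial or of the form $(kC_0|x|^{-2/(p-1)},\,lC_0|x|^{-2/(p-1)})$ with $(k,l)$ a nontrivial nonnegative solution of \eqref{T011}. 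In the latter case at least one component is strictly positive on $\partial B_1$, contradicting the vanishing just established; hence $(u^0,v^0)\equiv(0,0)$.

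Consequently $E(0;u,v)=E(r;u^0,v^0)=0$. But $(u,v)$ is a positive solution, so Theorem \ref{T01}(1) forces $E(0;u,v)=-A_{k,l}<0$ for some nontrivial nonnegative $(k,l)$ satisfying \eqref{T011}, a contradiction. The only delicate point is the step where the blow-up limit is declared identically zero: it relies on the very restricted dichotomy for scale-invariant nonnegative solutions (trivial or explicit radial power), which was itself a product of Theorem \ref{R-S} and Theorem \ref{T01}. Once that is invoked, the argument closes with no further analysis.
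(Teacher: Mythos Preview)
Your proof is correct but takes a different route from the paper's. The paper normalizes by the value $u(r_i)+v(r_i)$ itself, so that the rescaled pair $(u_i,v_i)=\bigl(u(r_i\,\cdot),v(r_i\,\cdot)\bigr)/(u(r_i)+v(r_i))$ satisfies a system whose nonlinearity carries the vanishing prefactor $\bigl(r_i^{2/(p-1)}(u(r_i)+v(r_i))\bigr)^{p-1}$; the Harnack inequality (Lemma \ref{har}) gives local uniform bounds, and standard gradient estimates then yield $|\nabla u|,|\nabla v|=o\bigl(r_i^{-2/(p-1)-1}\bigr)$ on $\partial B_{r_i}$, from which one reads off $E(r_i;u,v)\to 0$ directly and contradicts Theorem \ref{T01}. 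You instead keep the scale-invariant rescaling $r_j^{2/(p-1)}u(r_j\,\cdot)$, extract a limit $(u^0,v^0)$ solving \eqref{SPS} with constant energy $E(\cdot\,;u^0,v^0)\equiv E(0;u,v)$ via \eqref{S401}, and then invoke the dichotomy from Step~1 of the proof of Theorem \ref{T01} to force $(u^0,v^0)\equiv 0$ since it vanishes on $\partial B_1$. Your argument is a bit more conceptual, recycling the blow-up classification already in hand; the paper's is more self-contained at the level of estimates, needing only smallness of $u,v,|\nabla u|,|\nabla v|$ on a single sphere rather than identification of a limit profile.
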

\begin{proof}
We suppose by contradiction that
$$
\liminf_{|x| \to 0} |x|^{\frac{2}{p-1}} \left( u(x) + v(x) \right) = 0.
$$
Then there exists a sequence of positive numbers $r_i$ converging to 0 such that
\begin{equation}\label{S404}
r_i^{\frac{2}{p-1}} \left( u(r_i) + v(r_i) \right) \to 0 ~~~ \textmd{as} ~   i \to \infty.
\end{equation}
Define
$$
u_i(x)=\frac{u(r_i x)}{u(r_i) + v(r_i)}, ~~~~ v_i(x)=\frac{v(r_i x)}{u(r_i) + v(r_i)}.
$$
By Harnack inequality \eqref{har-01},  $(u_i, v_i)$ is locally uniformly bounded away from the origin. Moreover, $(u_i, v_i)$ satisfies
$$
\begin{cases}
-\Delta u_i =\Big( r_i^{\frac{2}{p-1}} \left( u(r_i) + v(r_i) \right)  \Big)^{p-1}  \left( \mu_1 u_i^{2q+1} + \beta u_i^q v_i^{q+1} \right) \\
-\Delta v_i =\Big( r_i^{\frac{2}{p-1}} \left( u(r_i) + v(r_i) \right)  \Big)^{p-1} \left( \mu_2 v_i^{2q+1} + \beta v_i^q u_i^{q+1} \right)
\end{cases} \textmd{in} ~\mathbb{R}^n \backslash \{0\}.
$$
It follows from the classical gradient estimates that $\nabla u_i$ and $\nabla v_i$ are locally uniformly bounded in $C_{loc}(\mathbb{R}^n \backslash \{0\})$.
Hence, there exists some $C>0$ independent of $i$  such that
$$
|\nabla u (x)|,~ |\nabla v (x)| \leq C r_i^{-1} (u(r_i) + v(r_i))) = o(1) r_i^{-\frac{2}{p-1} - 1}~~~ \textmd{for} ~ \textmd{all} ~ |x|=r_i.
$$
This together with \eqref{S404} easily yield $\lim_{i \to \infty} E(r_i; u, v)=0$. By the monotonicity of $E$, we get $\lim_{r \to 0 } E(r; u, v)=0$. This  contradicts Theorem \ref{T01}.
\end{proof}

\begin{lemma}\label{L4-02}
Let $n=3$ and $(u, v)$ be a nonnegative   solution of \eqref{SPS} with $3< p < 5$.  If
$$
\lim_{|x|\to 0} |x|^{\frac{2}{p-1}} u(x)=0,
$$
then the singularity of $u$ at $x=0$ is removable.
\end{lemma}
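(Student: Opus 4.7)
The argument exploits two inequalities special to the regime $n=3$, $3<p<5$: $q:=(p-1)/2>1$ and $\delta:=2/(p-1)<1=n-2$. The hypothesis $\lim_{|x|\to 0}|x|^{\delta}u(x)=0$ combined with $\delta<n-2$ first forces $u(x)=o(|x|^{2-n})$ as $x\to 0$. Since $u\ge 0$ is superharmonic and $F:=\mu_1 u^p+\beta u^q v^{q+1}$ lies in $L^1_{\loc}(B_1)$ (by Lemma~\ref{L-01} and $2p/(p-1)<n$), the classical Brezis--Lions removable-singularity criterion upgrades $-\Delta u=F$ to hold in $\mathcal D'(B_1)$ with no concentrated Dirac mass at $0$.

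The core of the proof is a dyadic scaling/bootstrap that gradually improves the a priori rate. Setting $\epsilon(r):=\sup_{|x|\le r}|x|^{\delta}u(x)\to 0$ and rescaling $\tilde u(y):=R^{\delta}u(Ry)$, $\tilde v(y):=R^{\delta}v(Ry)$, the pair $(\tilde u,\tilde v)$ satisfies \eqref{SPS} on $B_1$ with $\tilde u|_{\partial B_1}\le\epsilon(R)$ and $\tilde u,\tilde v\le C|y|^{-\delta}$, whence the source $\tilde f\le C\epsilon(R)^{q}|y|^{-2p/(p-1)}$ (the exponent $q$, not $p$, arises from the coupling term and dominates once $\epsilon(R)<1$). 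Decomposing $\tilde u=G_1*\tilde f+\tilde h$ on $B_1$ into its Dirichlet Green potential plus the harmonic extension of $\tilde u|_{\partial B_1}$, the Riesz estimate $(G_1*\tilde f)(y)\le C\epsilon(R)^{q}|y|^{-\delta}$ and the maximum principle bound $|\tilde h|\le\epsilon(R)$ together give $\tilde u(y)\le C\epsilon(R)^{q}|y|^{-\delta}+\epsilon(R)$ in $B_1$. Translating back to the original variable on $|x|\le R/2$ produces the dyadic contraction
\[
\epsilon(R/2)\;\le\;\bigl(2^{-\delta}+C\epsilon(R)^{q-1}\bigr)\,\epsilon(R),
\]
whose multiplier is strictly below $1$ for $R$ small enough (the factor $2^{-\delta}<1$ uses $\delta>0$, and $\epsilon(R)^{q-1}\to 0$ uses $q>1$). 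Iterating yields polynomial decay $\epsilon(r)\le Cr^{\gamma}$ for any $\gamma<\delta$, equivalently $u(x)\le C|x|^{-\eta}$ for $\eta>0$ arbitrarily small.

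To finish, plug this refined bound into $F\le C|x|^{-p\eta}+C|x|^{-q\eta-(q+1)\delta}$; the identity $(q+1)\delta=(q+1)/q<2$ (a third use of $q>1$) then shows $F\in L^{s}_{\loc}(B_1)$ for some $s>n/2=3/2$. Standard $L^{p}$-elliptic regularity gives $u\in L^{\infty}_{\loc}(B_1)$, and bootstrapping produces $u\in C^{2}(B_1)$, so the singularity at $0$ is removable. The main technical obstacle is the uniform Riesz-potential estimate $(G_1*\tilde f)(y)\le C\epsilon(R)^{q}|y|^{-\delta}$, which reduces to the standard dyadic split bound $\int_{B_1}|y-z|^{2-n}|z|^{-\alpha}\,dz\le C|y|^{2-\alpha}$ for $\alpha=2p/(p-1)\in(2,n)$, together with verifying that the maximum principle applies to $\tilde h$ across the (already removed) singularity at $0$; everything else is routine once the Brezis--Lions step of the first paragraph is in place.
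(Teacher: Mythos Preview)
Your argument is correct and reaches the same conclusion as the paper, but the route is genuinely different. The paper follows Aviles's device: for each $0<s<1$ the auxiliary function $w=|x|^{s}u$ satisfies $\Delta w-\tfrac{2s}{|x|^{2}}x\cdot\nabla w\ge 0$ near the origin (the hypothesis together with Lemma~\ref{L-01} kills the zeroth-order term, using $q>1$), and since $w=O(|x|^{2-n+\varepsilon})$ the Gilbarg--Serrin maximum principle for subsolutions with an isolated singularity gives $u(x)\le C_{s}|x|^{-s}$, hence $u\in L^{\gamma}(B_{1})$ for every $\gamma$; the paper then writes $-\Delta u=f\,u$ with $f=\mu_{1}u^{2q}+\beta u^{q-1}v^{q+1}\in L^{3/(2-\theta_{0})}(B_{1})$ and invokes Serrin's removable-singularity Theorem~11 directly. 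You instead use Brezis--Lions to extend the equation across $0$ distributionally, then run a self-contained dyadic contraction via the Green-potential decomposition $\tilde u=G_{1}*\tilde f+\tilde h$ to obtain the same decay $u\le C|x|^{-\eta}$, finishing with Calder\'on--Zygmund rather than Serrin. Both proofs ultimately hinge on the same two inequalities specific to $n=3$: $q>1$ (so the coupling term $u^{q}v^{q+1}$ already carries a small factor from $u$ alone, with no hypothesis on $v$) and $(q+1)\delta=(p+1)/(p-1)<2$ (so the $v$-contribution lands in $L^{3/2+}$). The paper's proof is shorter because it outsources the work to the classical theorems of Gilbarg--Serrin and Serrin; your version is more self-contained and exhibits an explicit quantitative decay mechanism, at the cost of having to justify the Riesz-potential bound and the harmonicity of $\tilde h$ across the origin by hand.
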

\begin{remark}\label{R120}
We remark that there is no additional assumption  for $v$  in Lemma \ref{L4-02}.
\end{remark}
\bp
We consider an auxiliary function $w(x)$ as in \cite{A}, given  by
$$
w(x)=|x|^s u(x),~~~ s>0.
$$
Since
$$
w_{x_i} = s |x|^{s-2} x_i u(x) + |x|^s u_{x_i}
$$
and
$$
w_{x_i x_i}= s(s-2) |x|^{s-4} x_i^2 u + s |x|^{s-2} u +2s |x|^{s-2}x_i u_{x_i} +|x|^s u_{x_i x_i},
$$
we get
$$
\Delta w - \frac{2s}{|x|^2} x \cdot \nabla w = \left[ s(n-2-s) - |x|^2 (\mu_1 u^{2q} + \beta u^{q-1} v^{q+1}) \right]  \frac{w}{|x|^2}.
$$
By our assumption and \eqref{S306} in Lemma \ref{L-01}, we have
$$\lim_{|x| \to 0} |x|^2 (\mu_1 u^{2q} + \beta u^{q-1} v^{q+1} ) =0. $$
Hence, for any $0 < s <1$ there exists $R_s>0$ such that
$$
\Delta w - \frac{2s}{|x|^2} x \cdot \nabla w \geq 0~~~~~ \textmd{in}  ~ B_{R_s} \backslash \{0\}.
$$
On the other hand, since $u(x) \leq C |x|^{-\frac{2}{p-1}}$ and $\frac{2}{p-1} < n-2$, we deduce that there exists $\epsilon>0$ independent of $s$ such that $w(x)=O(|x|^{2-n+\epsilon})$.  By Theorem 1 and Remark 1 in \cite{G-S},  we obtain
$$
u(x) \leq \left( \frac{R_s}{|x|} \right)^s \max_{\partial B_{R_s}} u(x),~~~~~\textmd{for}  ~ x\in B_{R_s} \backslash \{0\}.
$$
Therefore, we have  $u \in L^\gamma(B_1)$  for all $\gamma>1$.

\vskip0.1in

Now we write  the first equation of \eqref{SPS} in the form $ -\Delta u = f(x) u$,
where  $f=\mu_1 u^{2q} + \beta u^{q-1} v^{q+1}$.  Since $v^{q+1}\leq C |x|^{-\frac{p+1}{p-1}}$ and $p>3$, we  get
$$
v^{q+1} \in L^{\frac{3}{2-\theta}} (B_1)
$$
for some $\theta>0$  small.  Hence $f\in L^{\frac{3}{2-\theta_0}} (B_1)$ for some $\theta_0>0$ smaller.   By Theorem 11 of Serrin \cite{S} , we obtain that  0 is a removable singularity of $u$, i.e., $u(x)$ can be extended to a continuous $H^1$ weak solution in the entire ball $B_1$.
\ep

\begin{lemma}\label{L4-03}
Let $n\geq 4$ and $(w_1, w_2)$ be a nonnegative   solution of \eqref{S301} with $\frac{n}{n-2}< p < \frac{n+2}{n-2}$.  If
$$
\liminf_{t \to +\infty} w_1(t)=\liminf_{t \to +\infty} w_2 (t)= 0,
$$
then
$$
\liminf_{t \to +\infty} (w_1 + w_2) (t)=0.
$$
\end{lemma}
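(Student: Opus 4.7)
The plan is to argue by contradiction. Suppose $\liminf_{t \to +\infty}(w_1 + w_2)(t) = \eta > 0$, so $w_1(t) + w_2(t) \geq \eta/2$ for all $t \geq T_0$. I first dispose of the degenerate case: by the strong maximum principle applied to the superharmonic function $u$, either $u \equiv 0$ (hence $w_1 \equiv 0$) or $w_1 > 0$ throughout $\mathbb{R}$. If $w_1 \equiv 0$ then $\liminf(w_1 + w_2) = \liminf w_2 = 0$, already contradicting $\eta > 0$; the same goes for $w_2$. So I may assume $w_1, w_2 > 0$. The regularity needed below follows from Lemma \ref{L-01}: the bounds $|u| \leq C r^{-\delta}$ and $|\nabla u| \leq C r^{-\delta-1}$ transfer through the change of variables to $|w_1|, |w_1'| \leq C$ on $\mathbb{R}$, and the ODE \eqref{S301} then bounds $|w_1''|$. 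Analogous bounds hold for $w_2$.

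The key step is to locate a sequence of local minima $t_k \to +\infty$ of $w_1$ with $w_1(t_k) \to 0$. If instead $w_1(t) \to 0$ as $t \to +\infty$, I am done immediately: together with $\liminf w_2 = 0$ this gives $\liminf(w_1 + w_2) = 0$, a contradiction. Otherwise $\limsup_{t \to +\infty} w_1 > 0$, so I fix $c > 0$ and choose interlaced sequences $\sigma_1 < s_1 < \sigma_2 < s_2 < \cdots$ with $w_1(\sigma_k) \geq c$ and $w_1(s_k) < 1/k$. For $k$ large enough that $1/k < c$, the minimum of $w_1$ on $[\sigma_k, \sigma_{k+1}]$ cannot be attained at either endpoint, so it is attained at some interior $t_k \in (\sigma_k, \sigma_{k+1})$, which is therefore a local minimum with $w_1(t_k) \leq w_1(s_k) < 1/k$. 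At such $t_k$ one has $w_1'(t_k) = 0$ and $w_1''(t_k) \geq 0$, and the first equation of \eqref{S301} yields
\[
\beta w_1(t_k)^q w_2(t_k)^{q+1} \leq \sigma w_1(t_k) - \mu_1 w_1(t_k)^{2q+1} \leq \sigma w_1(t_k),
\]
hence $w_2(t_k)^{q+1} \leq (\sigma/\beta)\, w_1(t_k)^{1-q}$.

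Now the dimensional hypothesis enters: for $n \geq 4$ one has $p < \frac{n+2}{n-2} \leq 3$, so $q = (p-1)/2 < 1$, and therefore $w_1(t_k)^{1-q} \to 0$. This forces $w_2(t_k) \to 0$, and hence $(w_1 + w_2)(t_k) \to 0$, contradicting $\eta > 0$ and closing the argument. The main obstacle in this plan is the construction of the local-minimum sequence with $w_1$-values going to zero; the interlacing trick resolves it but genuinely needs the uniform $C^2$ bound from Lemma \ref{L-01}. The role of $q < 1$ is essential: it makes the coupling term $\beta w_1^q w_2^{q+1}$ dominate the linear term $\sigma w_1$ as $w_1 \to 0$, thereby forcing $w_2 \to 0$. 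When $n = 3$ and $q > 1$, the inequality $w_2^{q+1} \leq (\sigma/\beta)\, w_1^{1-q}$ becomes vacuous as $w_1 \to 0$, explaining why the $n = 3$ case is genuinely harder and is not covered by this lemma.
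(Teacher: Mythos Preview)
Your proof is correct and follows essentially the same approach as the paper: split into the case where $w_1$ tends to zero (yielding the conclusion directly via $\liminf w_2 = 0$) versus the oscillating case, and in the latter evaluate the first equation of \eqref{S301} at a sequence of local minima $t_k$ of $w_1$ to obtain $\beta w_2(t_k)^{q+1} \leq \sigma w_1(t_k)^{1-q} \to 0$ since $q<1$ when $n\geq 4$. Your interlacing construction of the minima is more explicit than the paper's one-line assertion, and your contradiction framing is cosmetic; one small overstatement is that the interlacing step needs only continuity and $C^2$ smoothness of $w_1$, not the uniform $C^2$ bounds from Lemma~\ref{L-01}.
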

\bp
If there exists $T$ such that $w^\prime_1 (t) \leq 0$ for all $t > T$, then we have
$$
\lim_{t \to +\infty} w_1 (t)= 0,
$$
and hence $\liminf_{t \to +\infty} (w_1 + w_2) (t)=0$. Otherwise, there exits a sequence of local minimum point $t_i$ of $w_1$ such that $t_i\to +\infty$ and  $w_1(t_i) \to 0$.  By $w^{\prime \prime}_1 (t_i) \geq 0$ and the first equation of \eqref{S301},  we have
$$
\beta w_2^{q+1}(t_i) \leq \sigma w_1^{1-q} (t_i).
$$
By $n\geq 4$, we have $0< q < 1$, and so $w_2(t_i) \to 0$. Hence $(w_1 + w_2) (t_i)  \to 0$.  The desired conclusion follows.
\ep

\vskip0.1in

\noindent {\bf Proof of (1) in Theorem \ref{T02}.}  We first prove that  \eqref{SPS} has no positive semi-singular  solutions at 0 for  $n \geq 4$.

\vskip0.1in
Let $n\geq 4$. Suppose by contradiction that $(u, v)$ is a positive semi-singular  solution at 0 of  \eqref{SPS}. Without loss of generality, we assume
\begin{equation}\label{PT00}
\lim_{r\to 0} u(r) =+\infty ~~~~ \textmd{and} ~~~~ \lim_{r\to 0} v(r) < +\infty.
\end{equation}
Then there exists $C_0 > 0$ such that
\begin{equation}\label{PT01}
u(r) \geq C_0 r^{-\frac{2}{p-1}},~~~~~~ \textmd{for}  ~ r\in (0, 1].
\end{equation}
Indeed, if $\eqref{PT01}$ does not hold,  then there exists $r_i \to 0$ such that $ r_n^{\frac{2}{p-1}} u(r_i) \to 0$.  Since $v$ is bounded near the origin, hence
$$
\liminf_{|x| \to 0} |x|^{\frac{2}{p-1}} (u(x) + v(x))=0.
$$
This contradicts  Lemma \ref{L4-01}.

\vskip0.1in

On the other hand,  by the decreasing property of $v$, there exists $C_1 >0$ such that $v(r) > C_1$ for all $(0,1]$.  By Lemma \ref{L-01}, we  have $r^{n-1} v^\prime (r) \to 0$ as $r\to 0$.  Since $v(x)$ is radially  symmetry, we write the second equation of \eqref{SPS} in the form
$$
-(r^{n-1} v^\prime (r))^\prime = r^{n-1} (\mu_2 v^{p+1} + \beta v^q u^{q+1}).
$$
Integrating  from 0 to $r$, we get
$$
\aligned
- r^{n-1} v^\prime (r) & = \int_0^r \rho^{n-1} (\mu_2 v^{p+1} + \beta v^q u^{q+1}) d\rho \geq \beta \int_0^r \rho^{n-1} v^q u^{q+1}  d\rho \\
& \geq C \int_0^r \rho^{n-1} \rho^{-\frac{p+1}{p-1}} d\rho   = C r^{n-\frac{p+1}{p-1}},
\endaligned
$$
that is, we have $-v^\prime (r) \geq C r^{-\frac{2}{p-1}}$ for all $r \in (0, 1]$. For any $r\in (0, 1)$, integrating  from $r$ to $1$, we obtain
$$
v(r) -v(1) \geq  C \int_r^1 \rho^{-\frac{2}{p-1}}d\rho  = C \left(\frac{2}{p-1} - 1 \right)^{-1} (r^{1-\frac{2}{p-1}}  -1).
$$
We note that $1-\frac{2}{p-1} <0$ if $\frac{n}{n-2} < p <  \frac{n+2}{n-2}$ and $n \geq 4$.  Hence we get $v(r) \to +\infty$ as $r \to 0$, a contradiction with
\eqref{PT00}. This finishes the proof of  (1) in Theorem  \ref{T02}.

\vskip0.2in

\noindent{\bf Proof of (2) in Theorem  \ref{T02}.} Let $(u, v)$ be a positive solution of \eqref{SPS}. We will prove that \eqref{T021} holds under the assumption of (2) in Theorem \ref{T02}.  By Lemma \ref{L-01}, we only need to prove
\begin{equation}\label{PT02}
\liminf_{|x|\to 0} |x|^{\frac{2}{p-1}} u(x) > 0 ~~~~ \textmd{and} ~~~~\liminf_{|x|\to 0} |x|^{\frac{2}{p-1}} v(x) > 0.
\end{equation}
This is equivalent to prove
\begin{equation}\label{PT03}
\liminf_{t\to +\infty} w_1(t) > 0 ~~~~ \textmd{and} ~~~~\liminf_{t \to +\infty} w_2(t) > 0.
\end{equation}

\vskip0.1in

We prove \eqref{PT03} by considering  two cases separately.

\vskip0.1in

{\bf Case 1.}  $n\geq 4$.

\vskip0.1in

Suppose that \eqref{PT03} does not hold. Then by Lemma \ref{L4-01} and \ref{L4-03}, we may assume, without loss of generality, that
\begin{equation}\label{PT04}
\liminf_{t\to +\infty} w_1(t) = 0 ~~~~ \textmd{and} ~~~~\liminf_{t \to +\infty} w_2(t) =C_1> 0.
\end{equation}
If $\limsup_{t\to +\infty} w_1(t) > 0$, then there exits a sequence of local minimum point $t_i$ of $w_1$ such that $t_i\to +\infty$ and  $w_1(t_i) \to 0$.  By $w^{\prime \prime}_1 (t_i) \geq 0$ and the first equation of \eqref{S301},  we have
$$
\beta w_2^{q+1}(t_i) \leq \sigma w_1^{1-q} (t_i).
$$
By $n\geq 4$, we have $0< q < 1$, and so $w_2(t_i) \to 0$, a contradiction with \eqref{PT04}. Hence,  we get
\begin{equation}\label{PT05}
\lim_{t\to +\infty} w_1(t) = 0.
\end{equation}
It follows from the first equation of \eqref{S301}, Corollary \ref{C-01} and \eqref{PT04} that
\begin{equation}\label{PT06}
\aligned
w_1^{\prime\prime}(t) &= -\tau w_1^{\prime}(t)  + \sigma w_1(t) - ( \mu_1 w_1^{2q+1} + \beta w_1^q w_2^{q+1})(t) \\
& \leq \tau\delta w_1(t) + \sigma w_1(t) - ( \mu_1 w_1^{2q+1} + \beta w_1^q w_2^{q+1})(t)  \\
& = \delta^2 w_1(t) - ( \mu_1 w_1^{2q+1} + \beta w_1^q w_2^{q+1})(t) \\
& = w_1^q (t) ( \delta^2 w_1^{1-q} -\mu_1 w_1^{q+1} - \beta w_2^{q+1} )(t) < 0~~~~~ \textmd{for} ~ \textmd{all} ~~ t  \geq T_0
\endaligned
\end{equation}
with some $T_0 >0$ large.
If $w_1^\prime (t) >0$ for all $t>T_0$, then $w_1(t) \geq w_1(T_0) >0 $ for all $t>T_0$, a contradiction with \eqref{PT05}. So there exists $T_1 > T_0$  such that
$w_1^\prime (T_1) \leq 0$. By \eqref{PT06}, we have $w_1^\prime (t) \leq 0$ for all $t \geq T_1$. By Corollary \ref{C-01},
$$
|w_1^\prime(t)| = -w_1^\prime (t) \leq \delta w_1(t)~~~~ \textmd{for} ~ \textmd{all} ~~ t  \geq T_1.
$$
So we get $|w_1^\prime(t)|  \to 0$ as $t \to +\infty$ by \eqref{PT05}.  We easily deduce from \eqref{PT06} that $w_1^\prime (t) >0$ for all $t  \geq T_0$, a contradiction with \eqref{PT05}.  Therefore \eqref{PT03} holds.

\vskip0.1in

{\bf Case 2.}  $n= 3$ and assume that $\eqref{SPS}$ has no positive semi-singular  solutions at 0.

\vskip0.1in

Under this assumption, we  have
\begin{equation}\label{PT07}
\lim_{r \to 0} u(r) = \lim_{r \to 0} v(r)  = +\infty.
\end{equation}
Suppose by contradiction that \eqref{PT03} does not hold. Without loss of generality, we assume that $\liminf_{t \to +\infty} w_1(t)=0$.  Note  that $1 < q <2$ by $n=3$. By Lemma \ref{L-01},  we know that $w_1$ and $w_2$ are uniformly bounded for $t\in \mathbb{R}$.  Then there exists $c>0$ such that
\begin{equation}\label{PT08}
w_1^{\prime\prime} (t) +\tau w_1^\prime (t)  \geq \sigma w_1(t) -c w_1^q(t)~~~~~~\textmd{in} ~ \mathbb{R}.
\end{equation}
If $\limsup_{t \to +\infty} w_1(t) >0$, then there exits a sequence of local minimum point $t_i$ of $w_1$ such that $t_i\to +\infty$ and  $w_1(t_i) \to 0$.
By \eqref{PT08}, there exists $\epsilon >0$ small such that
\begin{equation}\label{PT09}
\frac{d}{dt} \left( e^{\tau t} w_1^{\prime} (t) \right) = e^{\tau t} ( w_1^{\prime\prime} (t) +\tau w_1^\prime (t))   >0
\end{equation}
whenever $w_1(t) \leq 2\epsilon$.  Hence, there exist $t_i^* < t_i$ such that $w_1(t_i^*)=\epsilon$ and $w^\prime_1(t) < 0$ for  $t\in [t_i^*, t_i)$. So we have (making $\epsilon$ smaller if necessary)
\begin{equation}\label{PT10}
w_1^{\prime\prime} (t)   \geq \sigma w_1(t) -c w_1^q(t) \geq \frac{\sigma}{2}w_1(t)~~~~~~\textmd{in} ~  [t_i^*, t_i).
\end{equation}
Hence $(w_1^\prime)^2 - \frac{\sigma}{2} w_1^2$ is nonincreasing in $[t_i^*, t_i)$. In particular, we have
$$
(w_1^\prime)^2(t) - \frac{\sigma}{2} w_1^2(t) \geq - \frac{\sigma}{2} w_1^2(t_i)
$$
for $t \in [t_i^*, t_i)$. Integrating the inequality above, we get
$$
t_i - t_i^* \leq \sqrt{\frac{2}{\sigma}}   \int_{w_1(t_i)}^{w_1(t_i^*)} \frac{dw}{\sqrt{w^2 - w_1^2(t_i)}} \leq \sqrt{\frac{2}{\sigma}}  \log  \frac{2 w_1(t_i^*)}{w_1(t_i)}.
$$
Therefore,
$$
w_1(t_i) \leq 2 \epsilon e^{-\sqrt{\frac{\sigma}{2}} (t_i - t_i^*)}.
$$
Let
$$
W_1(t):= 2 \epsilon e^{-\sqrt{\frac{\sigma}{2}} (t - t_i^*)}.
$$
Then we have
\begin{equation}\label{PT11}
\begin{cases}
-w_1^{\prime\prime} (t) + \frac{\sigma}{2} w_1(t) \leq 0 = -W_1^{\prime\prime} (t) + \frac{\sigma}{2} W_1(t)~~~~~~~\textmd{in} ~  [t_i^*, t_i),\\
w_1(t_i^*) \leq  W_1(t_i^*), \\
w_1(t_i) \leq W_1(t_i).
\end{cases}
\end{equation}
The maximum principle gives
\begin{equation}\label{PT12}
w_1(t) \leq W_1(t)= 2\epsilon e^{-\sqrt{\frac{\sigma}{2}} (t - t_i^*)} ~~~~~~~\textmd{in} ~  [t_i^*, t_i).
\end{equation}
By Lemma \ref{L-01},  there exists $C>0$ such that $w_1, w_2, |w^\prime_1|, |w^\prime_2| \leq C$ for all $t\in \mathbb{R}$.  Hence, up to a subsequence, $w_1(\cdot + t_i^*) \to \widetilde{w}_1 \geq 0$ and $w_2(\cdot + t_i^*) \to \widetilde{w}_2 \geq 0$ uniformly in $C_{loc}^2(\mathbb{R})$, where $\widetilde{w}_1(0)=1$ and $(\widetilde{w}_1, \widetilde{w}_2)$ satisfies
$$
\begin{cases}
\widetilde{w}_1^{\prime\prime} + \tau \widetilde{w}_1^{\prime} -  \sigma \widetilde{w}_1 + \mu_1 \widetilde{w}_1^{2q+1} + \beta \widetilde{w}_1^q \widetilde{w}_2^{q+1}=0 \\
\widetilde{w}_2^{\prime\prime} + \tau \widetilde{w}_2^{\prime} -  \sigma \widetilde{w}_2 + \mu_2 \widetilde{w}_2^{2q+1} + \beta \widetilde{w}_2^q \widetilde{w}_1^{q+1}=0
\end{cases} t\in \mathbb{R}.
$$
Furthermore,
 $$
\aligned
&\frac{1}{2} \left( |\widetilde{w}_1^{\prime}|^2  + |\widetilde{w}_2^{\prime}|^2 - \sigma \left( \widetilde{w}_1^2 + \widetilde{w}_2^2 \right) \right) (t)\\
& ~ +\frac{1}{p+1} \left( \mu_1 \widetilde{w}_1^{p+1} + 2\beta \widetilde{w}_1^{q+1} \widetilde{w}_2^{q+1} + \mu_2 \widetilde{w}_2^{p+1} \right) (t) \equiv \Psi(+\infty).
\endaligned
$$
Here $\Psi$ is defined in \eqref{S303}.  Clearly, the limit $\Psi(+\infty):=\lim_{t \to +\infty} \Psi(t)$ exists. Therefore, we have
$$
 - \tau \left[ (\widetilde{w}_1^{\prime})^2 + (\widetilde{w}_2^{\prime})^2 \right](t) \equiv 0 ~~~~~ t\in \mathbb{R}.
$$
So $\widetilde{w}_1(t) \equiv 1$ for $t \in \mathbb{R}$.  On the other hand, since $w_1^\prime (t) \leq 0$ in $[t_i^*, t_i]$,  by Corollary \ref{C-01}, we obtain $|w_1^\prime (t)|=-w_1^\prime (t) \leq \delta w_1(t)$ for all  $t\in [t_i^*, t_i]$.  By the mean value theorem, we have
$$
t_i - t_i^* \geq \frac{1}{\delta} \ln\frac{\epsilon}{w_1(t_i)} \to +\infty.
$$
This together with \eqref{PT12} yield
$$
\widetilde{w}_1(t) \leq 2 \epsilon e^{-\sqrt{\frac{\sigma}{2}} t},~~~~~ \textmd{for} ~ t >0.
$$
This is a contradiction with $\widetilde{w}_1(t) \equiv 1$ for $t \in \mathbb{R}$.  Therefore, $\lim_{t \to +\infty} w_1(t)=0$, namely, $\lim_{r\to 0} r^{\delta} u(r)=0$. By Lemma \ref{L4-02}, the singularity of $u$ at 0 is removable,  and so $\lim_{r\to 0} u(r) < +\infty$, a contradiction with \eqref{PT07}.  This finishes the proof of  (2) in Theorem  \ref{T02}.

\vskip0.2in

Next we are going to prove  (3) and (4) in Theorem  \ref{T02}. To this end,  we define  the Kelvin transform
$$
\bar{u}(x)=\frac{1}{|x|^{n-2}} u\left( \frac{x}{|x|^2}\right),~~~~~~ \bar{v}(x)=\frac{1}{|x|^{n-2}} v \left( \frac{x}{|x|^2}\right).
$$
Then $(\bar{u}, \bar{v})$ satisfies
\begin{equation}\label{SPS-01}
\begin{cases}
-\Delta \bar{u} =|x|^\alpha (\mu_1 \bar{u}^{2q+1} + \beta \bar{u}^q \bar{v}^{q+1} ) \\
-\Delta \bar{v} =|x|^\alpha (\mu_2 \bar{v}^{2q+1} + \beta \bar{v}^q \bar{u}^{q+1} )
\end{cases} \textmd{in} ~\mathbb{R}^n \backslash \{0\},
\end{equation}
where $\alpha:=p(n-2) - (n+2) \in (-2, 0)$.  By Lemma \ref{L-01}, we also have
\begin{equation}\label{Ste301}
\bar{u}(x), ~ \bar{v}(x) \leq C |x|^{-\left( n-2-\frac{2}{p-1} \right)} = C |x|^{-\frac{2+\alpha}{p-1}} ~~~~~~\textmd{in} ~\mathbb{R}^n \backslash \{0\}
\end{equation}
and
\begin{equation}\label{Ste302}
|\nabla \bar{u}(x)|, ~ |\nabla \bar{v}(x)|  \leq C |x|^{-\left( n-1-\frac{2}{p-1} \right)} = C |x|^{-\frac{2+\alpha}{p-1} -1} ~~~~~~\textmd{in} ~\mathbb{R}^n \backslash \{0\}.
\end{equation}
Denote $\delta_0=\frac{2+\alpha}{p-1}$. Let $t=-\ln r$ and
$$
\bar{w}_1(t):=r^{\delta_0} \bar{u}(r)= e^{-\delta_0  t} \bar{u}(e^{-t}),~~~~~~~~ \bar{w}_2(t):=r^{\delta_0} \bar{v}(r)= e^{-\delta_0 t} \bar{v}(e^{-t}).
$$
Then by a direct  calculation  $(\bar{w}_1, \bar{w}_2)$ satisfies
\begin{equation}\label{Ste303}
\begin{cases}
\bar{w}_1^{\prime\prime} + \tau_0 \bar{w}_1^{\prime} -  \sigma_0 \bar{w}_1 + \mu_1 \bar{w}_1^{2q+1} + \beta \bar{w}_1^q \bar{w}_2^{q+1}=0 \\
\bar{w}_2^{\prime\prime} + \tau_0 \bar{w}_2^{\prime} -  \sigma_0 \bar{w}_2 + \mu_2 \bar{w}_2^{2q+1} + \beta \bar{w}_2^q \bar{w}_1^{q+1}=0
\end{cases} t\in \mathbb{R},
\end{equation}
where
$$
\tau_0=\frac{n-2}{p-1} \left(\frac{n+2+2\alpha}{n-2} -p \right) <  0,~~~\sigma_0=\frac{(2+\alpha)(n-2)}{(p-1)^2} \left(p- \frac{n+\alpha}{n-2} \right) >0.
$$
We note that there are some differences between  system \eqref{Ste303} and system \eqref{S301}.  In particular, the coefficient $\tau_0$ in  \eqref{Ste303} is {\it less than} 0, and the coefficient $\tau$ in \eqref{S301} is {\it greater than} 0. By Lemma \ref{R-S}, both $\bar{u}$ and  $\bar{v}$ are also radially symmetric. Similar to the proof of Lemma \ref{L-03} and Corollary \ref{C-01}, we  have
\begin{lemma}\label{S3L01}
Assume that $\frac{n}{n-2} < p < \frac{n+2}{n-2}$ and $\alpha=p(n-2) - (n+2)$.

\begin{itemize}
\item [(1)] Let $(\bar{u}, \bar{v})$ be a positive solution of \eqref{SPS-01}. Then both $\bar{u}^{\prime}(r) < 0$ and $\bar{v}^{\prime}(r) < 0$ for all $r>0$.

\item [(2)] Let $(\bar{w}_1, \bar{w}_2)$ be a positive solution of \eqref{Ste303}. Then $\bar{w}_i^\prime (t) > -\delta_0 \bar{w}_i(t)$ for all $t\in \mathbb{R}$ and $i=1, 2$.
\end{itemize}
\end{lemma}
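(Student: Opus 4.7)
The plan is to mirror the scheme of Lemma \ref{L-03} and Corollary \ref{C-01}, with the only new ingredient being the weight $|x|^{\alpha}$ that appears in \eqref{SPS-01}. Part (1) is a divergence-theorem computation that relies on the a priori gradient bound \eqref{Ste302} in order to kill an interior spherical boundary term, and part (2) is a direct chain-rule translation of the radial monotonicity from $r$-variable to $t$-variable. The key arithmetic check is that the exponent produced by \eqref{Ste302} near $0$ is favorable, which is built in by the hypothesis $\frac{n}{n-2}<p<\frac{n+2}{n-2}$.

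For (1), by Theorem \ref{R-S} applied to $(\bar{u},\bar{v})$ (which has a singularity at $0$ since the original $(u,v)$ has at worst a removable singularity at $\infty$; if both $\bar u,\bar v$ were regular at $0$, we would reduce to the classical radial setup on all of $\BR^n$), we may write $\bar u(x)=\bar u(r)$, $\bar v(x)=\bar v(r)$. Fix $0<\epsilon<r$. Integrating the first equation of \eqref{SPS-01} over $B_r\setminus B_\epsilon$ and using the divergence theorem gives
\be
-\int_{\partial B_r}\frac{\partial \bar u}{\partial \nu}+\int_{\partial B_\epsilon}\frac{\partial \bar u}{\partial \nu}=\int_{B_r\setminus B_\epsilon}|x|^{\alpha}\bigl(\mu_1\bar u^{p}+\beta\bar u^{q}\bar v^{q+1}\bigr)\,dx.
\ee
By \eqref{Ste302},
\be
\left|\int_{\partial B_\epsilon}\frac{\partial \bar u}{\partial \nu}\right|\leq C\,\epsilon^{n-1}\cdot\epsilon^{-\delta_0-1}=C\,\epsilon^{\,n-2-\delta_0}.
\ee
Since $\delta_0=\frac{p(n-2)-n}{p-1}<n-2$ (equivalent to $2>0$), this tends to $0$ as $\epsilon\to 0$. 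Passing to the limit and using radial symmetry yields
\be
-\sigma_{n-1}\,\bar u^\prime(r)\,r^{n-1}=\int_{B_r}|x|^{\alpha}\bigl(\mu_1\bar u^{p}+\beta\bar u^{q}\bar v^{q+1}\bigr)\,dx>0,
\ee
so $\bar u^\prime(r)<0$. The same argument applied to the second equation of \eqref{SPS-01} gives $\bar v^\prime(r)<0$.

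For (2), just compute: with $r=e^{-t}$,
\be
\bar w_1^\prime(t)=\frac{d}{dt}\bigl(r^{\delta_0}\bar u(r)\bigr)=-\delta_0 r^{\delta_0}\bar u(r)-r^{\delta_0+1}\bar u^\prime(r)=-\delta_0\bar w_1(t)-r^{\delta_0+1}\bar u^\prime(r),
\ee
and since part (1) gives $\bar u^\prime(r)<0$, we conclude $\bar w_1^\prime(t)>-\delta_0\bar w_1(t)$. The case of $\bar w_2$ is identical. The only mild subtlety I anticipate is justifying the application of Theorem \ref{R-S} and the bounds \eqref{Ste301}--\eqref{Ste302} to the Kelvin transform $(\bar u,\bar v)$ when the corresponding singularity at $0$ of $(\bar u,\bar v)$ is a priori possibly removable, but this causes no real difficulty since in that case $(\bar u,\bar v)$ extends smoothly through $0$ and the standard radial argument of \cite{CGS,Gidas2} applies verbatim, while in the singular case Theorem \ref{R-S} gives the radial symmetry directly.
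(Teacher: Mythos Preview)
Your proposal is correct and follows exactly the scheme the paper indicates (it merely says ``Similar to the proof of Lemma \ref{L-03} and Corollary \ref{C-01}''): divergence theorem plus the gradient bound \eqref{Ste302} for part (1), then the chain-rule computation for part (2). One small cleanup: you do not need to apply Theorem \ref{R-S} to $(\bar u,\bar v)$ directly (it is stated for \eqref{SPS}, not for the weighted system \eqref{SPS-01}); instead, the radial symmetry of $(\bar u,\bar v)$ is inherited immediately from that of $(u,v)$, since the Kelvin transform $x\mapsto x/|x|^2$ preserves radial functions.
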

We define
\begin{equation}\label{S3E03}
\aligned
\bar{\Psi}  (t; \bar{w}_1, \bar{w}_2):= & \frac{1}{2} \left( |\bar{w}_1^{\prime}|^2  + |\bar{w}_2^{\prime}|^2 - \sigma_0 \left( \bar{w}_1^2 + \bar{w}_2^2 \right) \right) (t) \\
 &~ +\frac{1}{p+1} \left( \mu_1 \bar{w}_1^{p+1} + 2\beta \bar{w}_1^{q+1} \bar{w}_2^{q+1} + \mu_2 \bar{w}_2^{p+1} \right) (t).
\endaligned
\end{equation}
Multiplying the first equation of \eqref{Ste303} by $\bar{w}_1^{\prime}$, the second equation of \eqref{Ste303} by $\bar{w}_2^{\prime}$, we easily obtain
\begin{lemma}\label{S3L02}
Let $(\bar{w}_1, \bar{w}_2)$ be a nonnegative  solution of \eqref{Ste303} with $\frac{n}{n-2} < p < \frac{n+2}{n-2}$. Then $\bar{\Psi}$ is nondecreasing  and uniformly bounded for $t\in \mathbb{R}$. Moreover,
\begin{equation}\label{S3E030}
\frac{d}{dt}\bar{\Psi} (t; \bar{w}_1, \bar{w}_2)=  - \tau_0 \left[ (\bar{w}_1^{\prime})^2 + (\bar{w}_2^{\prime})^2 \right](t).
\end{equation}
\end{lemma}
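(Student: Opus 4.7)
The approach is to derive the identity for $\frac{d}{dt}\bar{\Psi}$ by the same energy-type computation that yielded \eqref{S302} for the original ODE system, and then to get the uniform $L^\infty$ bound directly from the already-established pointwise estimates \eqref{Ste301}--\eqref{Ste302}. First I would multiply the first equation of \eqref{Ste303} by $\bar{w}_1'$ and the second by $\bar{w}_2'$, add the two identities, and check that every term assembles into a total $t$-derivative except the friction terms, which contribute $-\tau_0\bigl[(\bar{w}_1')^2+(\bar{w}_2')^2\bigr]$.

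The bookkeeping goes as follows: $\bar{w}_i''\bar{w}_i'=\tfrac{1}{2}\frac{d}{dt}|\bar{w}_i'|^2$ and $-\sigma_0\bar{w}_i\bar{w}_i'=-\tfrac{\sigma_0}{2}\frac{d}{dt}\bar{w}_i^2$; since $p=2q+1$, the diagonal nonlinearities give $\mu_i\bar{w}_i^{2q+1}\bar{w}_i'=\tfrac{\mu_i}{p+1}\frac{d}{dt}\bar{w}_i^{p+1}$. For the coupling, using $\bar{w}_i^q\bar{w}_i'=\tfrac{1}{q+1}(\bar{w}_i^{q+1})'$ for $i=1,2$ together with $2(q+1)=p+1$, the sum of the cross-terms equals
\[
\tfrac{\beta}{q+1}\frac{d}{dt}\bigl(\bar{w}_1^{q+1}\bar{w}_2^{q+1}\bigr)=\tfrac{2\beta}{p+1}\frac{d}{dt}\bigl(\bar{w}_1^{q+1}\bar{w}_2^{q+1}\bigr),
\]
which matches the coefficient in the definition \eqref{S3E03} of $\bar{\Psi}$. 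Collecting the pieces produces exactly \eqref{S3E030}.

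For monotonicity, note that $\tau_0<0$ throughout the subcritical range, as recorded after \eqref{Ste303}, so the right-hand side of \eqref{S3E030} is nonnegative and $\bar{\Psi}(\cdot;\bar{w}_1,\bar{w}_2)$ is nondecreasing in $t$. For uniform boundedness, I would transport the pointwise bounds \eqref{Ste301}--\eqref{Ste302} through the change of variables $t=-\ln r$, $\bar{w}_1(t)=r^{\delta_0}\bar{u}(r)$ (and analogously for $\bar{w}_2$). The estimate $\bar{u}(r),\bar{v}(r)\le Cr^{-\delta_0}$ yields $\bar{w}_i(t)\le C$ for all $t\in\mathbb{R}$, while $\bar{w}_i'(t)=-\delta_0\bar{w}_i(t)-r^{\delta_0+1}\bar{u}'(r)$ together with $|\bar{u}'(r)|\le Cr^{-\delta_0-1}$ gives $|\bar{w}_i'(t)|\le C$. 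Substituting into \eqref{S3E03} yields $|\bar{\Psi}(t;\bar{w}_1,\bar{w}_2)|\le C$ uniformly on $\mathbb{R}$.

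The argument is essentially routine and parallels Proposition \ref{P-01} and Lemma \ref{L-02}. The only conceptual novelty is the sign of the damping $\tau_0$: in \eqref{S301} it was positive, yielding $\Psi'\le 0$, whereas in the Kelvin-transformed system \eqref{Ste303} it is negative, which flips the monotonicity to nondecreasing. I expect no real obstacle beyond careful tracking of exponents when passing the spatial decay estimates through the logarithmic change of variables.
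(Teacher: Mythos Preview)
Your proposal is correct and follows essentially the same approach as the paper: multiply each equation of \eqref{Ste303} by the corresponding $\bar{w}_i'$, add, recognize the total derivative to obtain \eqref{S3E030}, and then read off monotonicity from $\tau_0<0$ and boundedness from \eqref{Ste301}--\eqref{Ste302} after the change of variables. The paper's own proof is in fact just the one-line instruction ``Multiplying the first equation of \eqref{Ste303} by $\bar{w}_1'$, the second equation of \eqref{Ste303} by $\bar{w}_2'$, we easily obtain\ldots''; your write-up simply fills in the routine bookkeeping.
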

Hence the limit $\bar{\Psi} (+\infty; \bar{w}_1, \bar{w}_2)=\lim_{t \to +\infty} \bar{\Psi} (t; \bar{w}_1, \bar{w}_2)$ exists. Further, we also have
\begin{lemma}\label{S3L03}
$\bar{\Psi} (+\infty; \bar{w}_1, \bar{w}_2) \in \left\{0, -\frac{p-1}{2(p+1)} (k^2 + l^2) \sigma_0^{\frac{p+1}{p-1}} \right\}$,
where $k, l \geq 0 $ (not all  0)  satisfy  \eqref{T011}.
\end{lemma}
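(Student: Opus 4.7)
The strategy is the rescaling/blow-up argument used in Step~2 of the proof of Theorem \ref{T01}, transplanted to the Kelvin-transformed system \eqref{SPS-01}. I will realise $\bar\Psi(+\infty;\bar w_1,\bar w_2)$ as the (necessarily constant) value of $\bar\Psi$ along a homogeneous limiting solution, and then classify such limits through the algebraic system \eqref{T011}. For each $\lambda>0$ define
\[
\bar u^\lambda(x):=\lambda^{\delta_0}\bar u(\lambda x),\qquad \bar v^\lambda(x):=\lambda^{\delta_0}\bar v(\lambda x),
\]
which again solves \eqref{SPS-01} by the scale invariance of that equation (the exponent $\delta_0=(2+\alpha)/(p-1)$ being chosen for exactly this reason). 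In the logarithmic variable this rescaling is just a translation $\bar w_i^\lambda(t)=\bar w_i(t-\ln\lambda)$, hence
\[
\bar\Psi(t;\bar w_1^{\lambda},\bar w_2^{\lambda})=\bar\Psi(t-\ln\lambda;\bar w_1,\bar w_2)\longrightarrow\bar\Psi(+\infty;\bar w_1,\bar w_2)
\]
as $\lambda\to 0^+$, for every fixed $t\in\mathbb R$.

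The pointwise bounds \eqref{Ste301}--\eqref{Ste302} are invariant under this scaling, yielding $\bar u^\lambda(x),\bar v^\lambda(x)\le C|x|^{-\delta_0}$ and $|\nabla\bar u^\lambda(x)|,|\nabla\bar v^\lambda(x)|\le C|x|^{-\delta_0-1}$ on $\mathbb R^n\setminus\{0\}$, with constants independent of $\lambda$. Applying interior $C^{2,\gamma}$ estimates to \eqref{SPS-01} on annuli in $\mathbb R^n\setminus\{0\}$, I extract a subsequence $\lambda_j\to 0^+$ along which $(\bar u^{\lambda_j},\bar v^{\lambda_j})\to(\bar u^0,\bar v^0)$ in $C^2_{loc}(\mathbb R^n\setminus\{0\})$, the limit being a nonnegative radial solution of \eqref{SPS-01}. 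Its logarithmic counterpart $(\bar w_1^0,\bar w_2^0)$ solves \eqref{Ste303}, and the display above forces $\bar\Psi(t;\bar w_1^0,\bar w_2^0)\equiv\bar\Psi(+\infty;\bar w_1,\bar w_2)$ for every $t\in\mathbb R$.

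The rigidity step is then immediate from Lemma \ref{S3L02}: constancy of $\bar\Psi(\cdot;\bar w_1^0,\bar w_2^0)$ together with $\tau_0<0$ plugged into \eqref{S3E030} forces $(\bar w_1^0)'\equiv(\bar w_2^0)'\equiv 0$, so $\bar w_i^0$ are constants $K_i\ge 0$, equivalently $\bar u^0(x)=K_1|x|^{-\delta_0}$ and $\bar v^0(x)=K_2|x|^{-\delta_0}$. Substituting into \eqref{SPS-01} and using the readily checked identity $\delta_0(n-2-\delta_0)=\sigma_0$ reduces the PDE to the algebraic system
\[
\sigma_0=\mu_1K_1^{2q}+\beta K_1^{q-1}K_2^{q+1},\qquad \sigma_0=\mu_2K_2^{2q}+\beta K_2^{q-1}K_1^{q+1},
\]
with the convention of Remark \ref{R-01} when one $K_i$ vanishes. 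Writing $K_i=k_i\sigma_0^{1/(p-1)}$ and using $2q=p-1$ converts this to \eqref{T011} for $(k,l):=(k_1,k_2)\ge 0$; the trivial solution $k=l=0$ corresponds to $(\bar u^0,\bar v^0)\equiv 0$.

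Finally I evaluate $\bar\Psi$ by substituting the constant pair $(\bar w_1^0,\bar w_2^0)=(k\sigma_0^{1/(p-1)},l\sigma_0^{1/(p-1)})$ into \eqref{S3E03}: the derivative terms drop out, and multiplying the two relations of \eqref{T011} by $k^2$ and $l^2$ respectively yields $\mu_1k^{p+1}+\mu_2l^{p+1}+2\beta k^{q+1}l^{q+1}=k^2+l^2$. Collecting the common factor $\sigma_0^{(p+1)/(p-1)}$ produces
\[
\bar\Psi(+\infty;\bar w_1,\bar w_2)=\Bigl(\tfrac{1}{p+1}-\tfrac12\Bigr)(k^2+l^2)\sigma_0^{(p+1)/(p-1)}=-\tfrac{p-1}{2(p+1)}(k^2+l^2)\sigma_0^{\frac{p+1}{p-1}},
\]
while the trivial limit gives $\bar\Psi(+\infty)=0$, establishing the claimed dichotomy. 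The only real technical point I anticipate is the compactness at the blow-up, but the scale-invariant bounds \eqref{Ste301}--\eqref{Ste302} make the $C^2_{loc}$ extraction routine; the only other subtlety is verifying that constancy of $\bar\Psi$ transfers to the limit \emph{pointwise} in $t$ rather than in some averaged sense, which is exactly what the translation identity above provides.
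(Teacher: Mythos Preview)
Your proposal is correct and follows essentially the same approach as the paper: translate along the logarithmic variable, extract a limit by compactness, use \eqref{S3E030} with $\tau_0<0$ to force the limit to be a constant pair, and classify via \eqref{T011}. The paper carries this out directly in the ODE variable (taking $t_i\to+\infty$ and passing $\bar w_j(\cdot+t_i)\to z_j$ in $C^2_{\mathrm{loc}}(\mathbb R)$), whereas you detour through the PDE rescaling $\bar u^\lambda(x)=\lambda^{\delta_0}\bar u(\lambda x)$ before observing that it is just the same translation; the ODE route is shorter, but your version is equally valid and in fact supplies the explicit evaluation of $\bar\Psi$ on the constant pair that the paper leaves implicit.
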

\bp
Given any sequence $t_i \to +\infty$, up to a subsequence, $\bar{w}_1(\cdot + t_i) \to z_1\geq 0$ and $\bar{w}_2(\cdot + t_i) \to z_2\geq 0$ in $C_{loc}^2(\mathbb{R})$.  Then $(z_1, z_2)$  satisfies \eqref{Ste303} and
\begin{equation}\label{S3E04}
\bar{\Psi} (t; z_1, z_2)=\lim_{i\to\infty}\bar{\Psi} (t; \bar{w}_1(\cdot + t_i), \bar{w}_2(\cdot + t_i))
\ee
\be
=\lim_{i\to\infty}\bar{\Psi} (t+t_i; \bar{w}_1, \bar{w}_2)=\bar{\Psi} (+\infty; \bar{w}_1, \bar{w}_2).
\end{equation}
That is, $\bar{\Psi} (t; z_1, z_2)$ is a constant for all $t\in \mathbb{R}$. By \eqref{S3E03}, both $z_1$ and $z_2$ are also constant. Then, either $z_1=z_2\equiv 0$, or $z_1=k \sigma_0^{\frac{1}{p-1}}$ and $z_2=l \sigma_0^{\frac{1}{p-1}}$, where $k, l \geq 0 $ (not all 0)  satisfy  \eqref{T011}.  The conclusion follows easily by \eqref{S3E04}.
\ep

\vskip0.1in

\noindent {\bf Proof of (3) in Theorem \ref{T02}.}
We prove the nonexistence of positive semi-singular solutions at $\infty$ for $n\geq 4$.
\vskip0.1in

Let $n\geq 4$. Suppose by contradiction that $(u, v)$ is a positive semi-singular  solution of  \eqref{SPS} at $\infty$.  Without loss of generality, we assume
\begin{equation}\label{3TPf01}
u(r) \leq C r^{-(n-2)} ~~~~ \textmd{and} ~~~~  v(r) \geq C r^{-\frac{2}{p-1}}~~~~~~~ \textmd{for} ~~ r ~\textmd{large}.
\end{equation}
Then
\begin{equation}\label{3TPf02}
\bar{u}(r) \leq C ~~~~ \textmd{and} ~~~~  \bar{v}(r) \geq C r^{-(n-2-\frac{2}{p-1})}~~~~~~~ \textmd{for} ~~ 0 < r < r_0
\end{equation}
with some $ 0 < r_0 < 1$.
Clearly there exists $C_0>0$ such that $\bar{u} (r) \geq C_0$ for all $r\in (0, r_0]$. By \eqref{Ste302}, we have $r^{n-1}\bar{u}^\prime(r) \to 0$ as $r \to 0$. Since
$$
-(r^{n-1} \bar{u}^\prime (r))^\prime = r^{n-1} r^\alpha  (\mu_1 \bar{u}^{p+1} + \beta \bar{u}^q \bar{v}^{q+1}).
$$
Integrating  from 0 to $r$, we get
$$
\aligned
- r^{n-1} \bar{u}^\prime (r) & = \int_0^r \rho^{n-1} \rho^\alpha ( \mu_1 \bar{u}^{p+1} + \beta \bar{u}^q \bar{v}^{q+1} ) d\rho \geq \beta \int_0^r \rho^{n-1+\alpha} \bar{u}^q \bar{v}^{q+1}  d\rho \\
& \geq C \int_0^r \rho^{n-1+\alpha} \rho^{-(n-2-\frac{2}{p-1}) \frac{p+1}{2}} d\rho   = C r^{\frac{p-1}{2}(n-2) + \frac{2}{p-1} -1 },
\endaligned
$$
namely $-\bar{u}^\prime (r) \geq C r^{\frac{p-1}{2}(n-2) + \frac{2}{p-1} -n}$ for all $ r\in (0,r_0]$. If $n\geq 4$, then a simple analysis gives $\frac{p-1}{2}(n-2) + \frac{2}{p-1} -n  < -1$ for all  $\frac{n}{n-2} < p < \frac{n+2}{n-2}$. This implies for any $r\in (0, r_0)$ that
$$
\bar{u}(r) - \bar{u}(r_0) \geq C \int_r^{r_0} \rho^{\frac{p-1}{2}(n-2) + \frac{2}{p-1} -n} d\rho \geq C \int_r^{r_0} \rho^{-1} d\rho = C(-\ln r + \ln r_0).
$$
Hence, $\bar{u}(r) \to +\infty$ as $r \to 0$, a contradiction with \eqref{3TPf02}. This completes the proof of (3) in Theorem \ref{T02}.

\vskip0.1in

\noindent {\bf Proof of (4) in Theorem \ref{T02}.} We now prove  (4) in Theorem \ref{T02} by discussing two steps separately.

\vskip0.1in
{\it Step 1.} $\bar{\Psi} (+\infty; \bar{w}_1, \bar{w}_2)=0$.

\vskip0.1in

In this case, by the proof of Lemma \ref{S3L03},  we know that
\begin{equation}\label{S3E05}
\lim_{t \to +\infty} \bar{w}_1(t) = \lim_{t \to +\infty} \bar{w}_2(t)=0.
\end{equation}
Let $\bar{w}=\bar{w}_1 + \bar{w}_2$. It follows from \eqref{Ste303} and Lemma \ref{S3L01} (2) that
$$
\aligned
\bar{w}^{\prime\prime}  & \geq -\tau_0 \bar{w}^{\prime} + \sigma_0 \bar{w} - \bar{C} \bar{w}^p \\
& \geq (\tau_0\delta_0 + \sigma_0) \bar{w} -\bar{C} \bar{w}^p = \delta_0^2 \bar{w} -  \bar{C} \bar{w}^p,
\endaligned
$$
where $\bar{C}= \bar{C}(\mu_1, \mu_2, \beta, p)$. By \eqref{S3E05}, $\bar{w}^{\prime\prime}(t) > 0$ for $t> T_1$ with some $T_1 >0$ large, and hence
\begin{equation}\label{S3E06}
\bar{w}^{\prime} (t) <  0 ~~~~ \textmd{for}~ t > T_1.
\end{equation}
Otherwise,  there exists $t_1 > T_1$ such that $\bar{w}^{\prime}(t_1) >   0$, and then $\bar{w}^{\prime}(t) \geq \bar{w}^{\prime}(t_1) >0$ for all $t> t_1$. We get $\bar{w}(t) \to +\infty$ as $t \to +\infty$, a contradiction with \eqref{S3E05}.  Let $\bar{z}(t)=\bar{w}^{\prime}(t) + \delta_0\bar{w}(t)$. Then
\begin{equation}\label{S3E07}
\bar{z}^\prime - \delta_0 \bar{z} = \bar{w}^{\prime\prime} - \delta_0^2 \bar{w} \geq  -\bar{C} \bar{w}^p.
\end{equation}
By \eqref{Ste301} and \eqref{Ste302}, we easily deduce that $\bar{z}$ is bounded. It follows from \eqref{S3E06} and \eqref{S3E07} that
$$
\bar{z}(t) \leq \bar{C} e^{\delta_0 t} \int_t^{+\infty}  e^{- \delta_0 \rho} \bar{w}^p(\rho) d\rho \leq \frac{\bar{C}}{\delta_0} w^p(t),~~~~ \forall ~ t> T_1.
$$
That is,
$$
\bar{w}^{\prime}(t) + \delta_0\bar{w}(t)  \leq \frac{\bar{C}}{\delta_0} w^p(t),~~~~ \forall ~ t> T_1.
$$
From this we easily obtain
\begin{equation}\label{S3E08}
\frac{d}{dt} \left[(e^{\delta_0 t} \bar{w}(t))^{1-p} - \frac{\bar{C}}{\delta_0^2} e^{(1-p)\delta_0 t} \right] \geq 0,~~~~ \forall ~ t> T_1.
\end{equation}
If $\limsup_{t \to +\infty} e^{\delta_0 t}\bar{w}(t) =+\infty$, then there exists a sequence $t_i \to +\infty$ such that $(e^{\delta_0 t_i} \bar{w}(t_i))^{1-p} - \frac{\bar{C}}{\delta_0^2} e^{(1-p)\delta_0 t_i} \to 0$. By \eqref{S3E08}, we have
$$
(e^{\delta_0 t} \bar{w}(t))^{1-p} - \frac{\bar{C}}{\delta_0^2} e^{(1-p)\delta_0 t}  \leq 0,~~~~ \forall ~ t> T_1.
$$
This implies that $\bar{w}(t) \geq \left(\frac{\delta_0}{\bar{C}}\right)^{p-1}$ for all $t>T_1$, a contradiction with \eqref{S3E05}. Therefore, we have $\limsup_{t \to +\infty} e^{\delta_0 t}\bar{w}(t) < +\infty$ and so $e^{\delta_0 t}\bar{w}(t)  \leq C$ uniformly  for $t>0$ large enough. We obtain that $\bar{u} + \bar{v} \leq C$ uniformly  for $r>0$ small. That is,
$$
u(x) + v(x) \leq C |x|^{-(n-2)}
$$
uniformly for $|x|$ large.

\vskip0.1in

{\it Step 2.} $\bar{\Psi} (+\infty; \bar{w}_1, \bar{w}_2) < 0$.

\vskip0.1in

Remark that, in this case, the limits $\lim_{t \to +\infty} \bar{w}_1(t)$ and  $\lim_{t \to +\infty} \bar{w}_2(t)$ {\it cannot be guaranteed to exist}  by the proof of Lemma \ref{S3L03}. See Remark \ref{R-02}.

\vskip0.1in

 We claim

\begin{equation}\label{3PrT01}
\liminf_{t\to +\infty} \bar{w}_1(t) > 0 ~~~~ \textmd{and} ~~~~\liminf_{t \to +\infty} \bar{w}_2(t) > 0.
\end{equation}
\vskip0.1in

We prove this claim  by considering two cases separately.

\vskip0.1in

{\bf Case 1.} $n \geq 4$.

\vskip0.1in

We just need to modify the  proof of Case 1 in  Theorem \ref{T02} (2), the main difference is $\tau_0 <0$ here and $\tau >0$ there.  Suppose that \eqref{3PrT01} does not hold. If
\begin{equation}\label{3PrT02}
\liminf_{t\to +\infty} \bar{w}_1(t) = 0 ~~~~ \textmd{and} ~~~~\liminf_{t \to +\infty} \bar{w}_2(t) = 0.
\end{equation}
then the same argument as that of Lemma \ref{L4-03} gives $\liminf_{t\to +\infty} (\bar{w}_1 + \bar{w}_2) (t) = 0$. We take $t_i \to +\infty$ such that $(\bar{w}_1 + \bar{w}_2) (t_i) \to 0 $. Then up to a subsequence, $\bar{w}_1(\cdot +t_i) \to \bar{z}_1$ and $\bar{w}_2(\cdot +t_i) \to \bar{z}_2$ uniformly in $C_{loc}^2(\mathbb{R})$, where $\bar{z}_1(0)= \bar{z}_2(0)=0$ and $(\bar{z}_1, \bar{z}_2)$ satisfies \eqref{Ste303}.  Moreover,
$\bar{\Psi}  (t; \bar{z}_1, \bar{z}_2) \equiv \bar{\Psi} (+\infty; \bar{w}_1, \bar{w}_2) $ for all $ t \in \mathbb{R}$. By \eqref{S3E03}, $\bar{z}_1=\bar{z}_2\equiv 0$,  a contradiction with $\bar{\Psi} (+\infty; \bar{w}_1, \bar{w}_2) <0$. Hence \eqref{3PrT02}  is impossible.  Without loss of generality, we may assume that
\begin{equation}\label{3PrT03}
\liminf_{t\to +\infty} \bar{w}_1(t) = 0 ~~~~ \textmd{and} ~~~~\liminf_{t \to +\infty} \bar{w}_2(t) =C_2> 0.
\end{equation}
The same argument as that of Case 1 in  Theorem \ref{T02} (2) yields $\lim_{t\to +\infty} \bar{w}_1(t) = 0 $. It follows from the first equation of \eqref{Ste303} that
\begin{equation}\label{3PrT04}
\bar{w}_1^{\prime\prime} + \tau_0 \bar{w}_1^{\prime} =\bar{w}_1^q (\sigma_0 \bar{w}_1^{1-q} - \mu_1 \bar{w}_1^{q+1} - \beta \bar{w}_2^{q+1}) <0 ~~~~~ \textmd{for} ~ \textmd{all} ~~ t  \geq T_3
\end{equation}
with some $T_3 >0$ large, and then $e^{\tau_0 t}\bar{w}_1^{\prime}(t)$ is strictly decreasing for $t > T_3$.  If there exists a sequence $t_i\to +\infty$ such that $\bar{w}_1^{\prime} (t_i) \geq 0$, then $\bar{w}_1^{\prime} (t) \geq 0$ for all $t > T_3$, a contradiction with $\lim_{t\to +\infty} \bar{w}_1(t) = 0$. Hence there exists $T_4 > T_3$ such that $\bar{w}_1^{\prime} (t) < 0$ for all $t > T_4$. By \eqref{3PrT04} and $\tau_0 <0$,  we have $\bar{w}_1^{\prime\prime}(t) < 0$ for $t > T_4$. On the other hand, by Lemma \ref{S3L01} (2), $|\bar{w}_1^{\prime}(t)|=-\bar{w}_1^{\prime}(t) \leq \delta_0 \bar{w}_1(t) \to 0$ as $t\to +\infty$.  We get $\bar{w}_1^{\prime}(t) >0$ for $t > T_4$, a contradiction with $\lim_{t\to +\infty} \bar{w}_1(t) = 0$. Therefore, \eqref{3PrT01} holds.

\vskip0.1in

{\bf Case 2.} $n = 3$ and assume that the system \eqref{SPS} has no positive semi-singular  solutions at $\infty$. 

\vskip0.1in

Suppose by contradiction that \eqref{3PrT01} does not hold, without loss of generality, we assume $\liminf_{t \to +\infty} \bar{w}_1(t)=0$. Since $\bar{w}_1(t)$ and $\bar{w}_2(t)$ are uniformly bounded for $t\in \mathbb{R}$, by Lemma \ref{S3L01}, there exists $\bar{c} >0$ such that
\begin{equation}\label{3PST01}
\bar{w}_1^{\prime\prime} \geq - \tau_0 \bar{w}_1^{\prime} +  \sigma_0 \bar{w}_1 - \bar{c} \bar{w}_1^{q} \geq \delta_0^2 \bar{w}_1 - \bar{c} \bar{w}_1^{q}~~~~~\textmd{in} ~ \mathbb{R}.
\end{equation}
If $\limsup_{t \to +\infty} \bar{w}_1(t)>0$, then there exists a sequence of local minimum points $t_i$ of $\bar{w}_1$ such that $t_i\to +\infty$ and $\bar{w}_1(t_i) \to 0$. By \eqref{3PST01}, there exists $\bar{\epsilon} >0$ small such that $\bar{w}_1^{\prime\prime}(t) >0$ whenever $\bar{w}_1(t) < 2\bar{\epsilon}$. Therefore, there exist $\bar{t}_i < t_i$ such that $\bar{w}_1(\bar{t}_i)=\bar{\epsilon}$ and $\bar{w}_1^{\prime}(t) < 0$ for $t\in [\bar{t}_i, t_i)$. By the same argument as that of Case 2 in  Theorem \ref{T02} (2), we  can get a contradiction. Hence $\lim_{t \to +\infty} \bar{w}_1(t)=0$. Then there exists $\bar{T}>0$ large such that $\bar{w}_1^\prime (t) < 0$ for $t\geq \bar{T}$. For any $\epsilon >0$ small, by \eqref{3PST01},  we can choose $\bar{T}$ large enough  such that
$$
\bar{w}_1^{\prime\prime}(t)  - (\delta_0 - \epsilon)^2\bar{w}_1(t)   \geq 0~~~~~~\textmd{for} ~t \geq \bar{T}.
$$
By a simple comparison principle argument,  we get
$$
\bar{w}_1(t) \leq \bar{w}_1(\bar{T}) \exp\{- (\delta_0 - \epsilon) (t- \bar{T})\} ~~~~~~\textmd{for} ~t \geq \bar{T}.
$$
From this we have that for any $\epsilon >0$, there exists $r_\epsilon >0$ and $c(\epsilon) >0$ such that $\bar{u}(x) \leq c(\epsilon) |x|^{-\epsilon}$ for $|x| < r_\epsilon$. Hence $\bar{u} \in L^\gamma (B_1)$ for all $\gamma >1$.  Recall that $n=3$, we have
$$
|x|^\alpha \bar{v}^{q+1} \leq C |x|^{\frac{p-1}{2} + \frac{2}{p-1} -4} ~~~~~~\textmd{for} ~0<|x| <1.
$$
Since $\frac{p-1}{2} + \frac{2}{p-1} -4 > -2$ by $3 < p< 5$,  we obtain $|x|^\alpha \bar{v}^{q+1} \in L^{\frac{3}{2- \theta_1}} (B_1)$ with some $\theta_1>0$ small.  We write  the first equation of \eqref{SPS-01} in the form $ -\Delta \bar{u} = g(x) \bar{u}$, where $g(x)=|x|^\alpha (\mu_1 \bar{u}^{2q} + \beta \bar{u}^{q-1} \bar{v}^{q+1} )(x) $, then $g \in L^{\frac{3}{2-\theta_2}}(B_1)$  for some $\theta_2>0$ small. By  Theorem 11 of Serrin \cite{S} , we obtain that  0 is a removable singularity of $\bar{u}$. In particular, $\bar{u} (x) \leq C$ for $|x|$ small, where $C$ is a positive constant. Hence we have $u(x) \leq C |x|^{-(n-2)}$ for $|x|$ large.

\vskip0.1in

If $\liminf_{t \to +\infty} \bar{w}_2(t)=0$, then $\lim_{t \to +\infty} (\bar{w}_1 + \bar{w}_2)(t)=0$, we easily obtain $\bar{\Psi} (+\infty; \bar{w}_1, \bar{w}_2)=0$, a contradiction. If $\liminf_{t \to +\infty} \bar{w}_2(t) > 0$, then $v(x)\geq C|x|^{-\frac{2}{p-1}}$ for $|x|$ large, a contradiction with the assumption. We complete the proof (4) in Theorem  \ref{T02}.

\end{document}